\newtheorem{teo}{Theorem}[section]
\newtheorem*{teo*}{Theorem}
\newtheorem{lem}[teo]{Lemma}
\newtheorem{cor}[teo]{Corollary}
\newtheorem{pro}[teo]{Proposition}
\theoremstyle{definition}
\newtheorem{fed}[teo]{Definition}
\theoremstyle{remark}
\newtheorem{exa}[teo]{Example}
\newtheorem{rem}[teo]{Remark}
\newtheorem{exas}[teo]{Examples}
\def\coma{\, , \, }
\def\py{\peso{and}}
\newcommand{\peso}[1]{ \quad \text{ #1 } \quad }
\def\n0{n_{ \text{\rm \tiny o}}}
\def\bce{\begin{center}}
\def\ece{\end{center}}
\def\cO{{\mathcal O}}
\def\cD{\mathcal D}
\def\cJ{{\mathcal J}}
\def\cI{{\mathcal I}}
\def\noi{\noindent}
\def\cF{\mathcal F}
\def\bm{\left[\begin{array}}
\def\em{\end{array}\right]}
\def\ben{\begin{enumerate}}
\def\een{\end{enumerate}}
\def\bit{\begin{itemize}}
\def\eit{\end{itemize}}
\def\barr{\begin{array}}
\def\earr{\end{array}}
\def\e{\mathrm{e}}
\def\la{\lambda}
\def\al{\alpha}
\def\N{\mathbb{N}}
\def\R{\mathbb{R}}
\def\C{\mathbb{C}}
\def\Z{\mathbb{Z}}
\def\cJ{\mathcal{J}}
\def\cD{\mathcal{D}}
\def\cC{\mathcal{C}}
\def\cE{\mathcal{E}}
\def\cH{\mathcal{H}}
\def\cK{\mathcal{K}}
\def\cP{\mathcal{P}}
\def\cM{{\cal M}}
\def\cB{{\cal B}}
\def\cN{{\cal N}}
\def\cV{{\cal V}}
\def\cU{{\cal U}}
\def\fS{\mathfrak{S}}
\def\da{^\downarrow}
\DeclareMathOperator{\Tr}{Tr} \DeclareMathOperator{\tr}{Tr}
\def\beq{\begin{equation}}
\def\eeq{\end{equation}}
\def\Ax2{\,( S_{E(\cF)^\#_\cV})\hat{}_x }
\newcommand{\PI}[2]{\left\langle #1 , #2 \right\rangle}
\newcommand{\SE}[2]{\{ #1_{#2} \}_{#2 \geq 1}}
\begin{document}
\title{On restricted diagonalization}
\author{Eduardo Chiumiento and Pedro Massey} 
\date{}
\maketitle

\begin{abstract}
Let $\cH$ be a separable infinite-dimensional complex  Hilbert space, $\cB(\cH)$ the algebra of bounded linear operators acting on $\cH$ and $\cJ$  a  proper two-sided ideal of $\cB(\cH)$. Denote  by $\cU_\cJ(\cH)$  the group  of all unitary operators  of the form $I+\cJ$. 
Recall that an operator $A \in \cB(\cH)$ is diagonalizable if there exists a unitary operator $U$ such that $UAU^*$ is diagonal with respect to some orthonormal basis. A more restrictive notion of diagonalization can be formulated with respect to a fixed  orthonormal basis $\e=\{ e_n\}_{n\geq 1}$ and a proper operator ideal $\cJ$ as follows:   $A \in \cB(\cH)$ is called restricted diagonalizable  if there exists 
$U\in \cU_\cJ(\cH)$ such that $UAU^*$ is diagonal with respect to $\e$. 
In this work we give  necessary and sufficient conditions for a diagonalizable operator to be restricted diagonalizable. Our conditions become a characterization of those diagonalizable operators which are restricted diagonalizable when the ideal is arithmetic mean closed. Then we obtain  results on the structure of the set of all  restricted diagonalizable operators. In this way 
we answer several open problems recently 
raised by Belti\c{t}$\breve{\text{a}}$,  Patnaik and  Weiss.
\end{abstract}

\bigskip

{\bf 2010 MSC:}   
 22E65, 47B10, 47A53

{\bf Keywords:} Operator ideal, restricted diagonalization, essential codimension. 

\section{Introduction}

Let $\cH$ be a separable infinite-dimensional complex  Hilbert space. An \textit{operator ideal} $\cJ$ is a two-sided ideal of the algebra of bounded linear operators $\cB(\cH)$. Each operator ideal has associated  a subgroup of the unitary operators defined by $\cU_\cJ(\cH):=\cU(\cH) \cap (I+ \cJ)$, where $\cU(\cH)$ denotes the unitary group of $\cH$. Throughout, we fix an orthonormal basis $\e=\{ e_n\}_{n\geq 1}$ of $\cH$
and we let $\cD$ denote the corresponding algebra of diagonal operators.
 We call an operator $A\in \cB(\cH)$ \textit{diagonalizable} if there exists a unitary $U\in\cU(\cH)$ such that $UAU^*\in\cD$.
 In their study of Lie theoretic properties of operator ideals, Belti\c{t}$\breve{\text{a}}$, Patnaik and Weiss introduced in \cite{BPW16} the notion of \textit{$\cU_\cJ(\cH)$-diagonalizable operators with respect to $\e$}, or simply, \textit{restricted diagonalizable operators}, which refers to those diagonalizable operators  
that can be diagonalized by a unitary $U \in \cU_\cJ(\cH)$. 
Furthermore, given two operator  ideals $\cI$, $\cJ$, they 
offered the following versions of restricted diagonalization defined by the sets:
$$
\cD_{\cJ,\cI}:=\{ UDU^* : D \in \cD \cap \cI, \, U \in \cU_\cJ(\cH) \} \subset\cI.
$$ 
For short, we write 
$\cD_\cJ:=\cD_{\cJ,\cJ}$.   In particular, the set of all 
restricted diagonalizable operators is given by $\cD_{\cJ,\cB(\cH)}$. In contrast to the notion of diagonalization, which 
is independent of the orthonormal basis, we point out that when the operator ideal $\cJ$ is proper (i.e. $\{0\}\neq \cJ\neq \cB(\cH)$), the choice of the orthonormal basis $\e$ plays a central role to determine whether an operator is restricted diagonalizable.

The aim of this paper is twofold. On the one hand, we give  necessary and 
sufficient conditions for an arbitrary diagonalizable operator  to be  $\cU_\cJ(\cH)$-diagonalizable with respect to $\e$ (for a detailed description of these conditions see Eq. \eqref{eq nuestras condiciones intro} below). On the other hand,
we study the structure of the self-adjoint part of $\cD_{\cJ}$ and the way it sits inside the self-adjoint part of the ideal $\cJ$. These  results allow us to answer  open problems stated in \cite{BPW16}.

In order to put our results in perspective, we describe some of the contexts in which restricted diagonalization has been considered. As a general framework, we mention the theory of Banach-Lie groups, which deals with  infinite dimensional Lie groups modeled on Banach spaces. These groups have been useful to encode, in geometrical terms, several structures arising in operator theory and its applications, and this has led to a fruitful interaction between functional analytic and geometric techniques. The groups $\cU_\cJ(\cH)$ are indeed Banach-Lie groups, whenever the operator ideal $\cJ$ admits a complete norm stronger than the operator norm (see, e.g. \cite{B, neeb0}). For instance, 
the well-known Schatten ideals $\fS_p(\cH)$ ($1\leq p \leq \infty$) satisfy this condition. 
The book \cite{dlHar72} by P. de la Harpe  is a standard reference on these types of groups. Other references about Banach-Lie groups associated to operator ideals and their homogeneous spaces are related to structure and representation theory of Banach-Lie groups \cite{B09, BPW16, neeb98, neeb0}, Banach Lie-Poisson spaces  \cite{BR05}, infinite dimensional Kh\"{a}ler geometry \cite{T07} and metric geometry of homogeneous spaces   \cite{AL8, ALR10, La19}. We also refer to the books \cite{B, U85} and the references therein for more information.

Motivated by the role of Cartan subalgebras in the structure theory of finite dimensional Lie algebras, the authors in  \cite{BPW16} studied  these subalgebras in the 
setting of operator ideals on an infinite dimensional separable Hilbert space. Cartan subalgebras were introduced there  as maximal abelian self-adjoint subalgebras of operator ideals. 
 When the operator ideal $\cJ$ is proper, the action of the group $\cU_\cJ(\cH)$ on the set of Cartan subalgebras  defines smaller orbits than the orbits provided by the full unitary group $\cU(\cH)$.  Among other results, they showed that  the action of $\cU_\cJ(\cH)$ on Cartan subalgebras has uncountable many distinct orbits. 
The notion of restricted diagonalization was introduced there, and used in deriving this and other results.   

 Additionally, we remark that restricted diagonalization had previously appeared in the literature in some specific cases; see  Hinkkanen  \cite{H85} for a sufficient condition to  belong to $\cD_{\fS_2(\cH)}$ (Hilbert-Schmidt operators case), and also the results about unitary equivalence of projections contained in the works by Brown, Douglas and Fillmore \cite{BDF73}, Str$\breve{\text{a}}$til$\breve{\text{a}}$ and Voiculescu \cite{SV78},  Carey \cite{C85}, and Kaftal and Loreaux \cite{KL17}. They respectively treated the cases of when two orthogonal projections are unitary equivalent by means of a unitary in $\cU_\cJ(\cH)$  for the ideals $\cJ=\cK(\cH)$ (compact operators), $\cJ=\fS_2(\cH)$ (Hilbert-Schmidt operators), $\cJ$ a symmetrically-normed ideal, and  an arbitrary proper operator ideal. 
We point out that these results rely on the  notion of \textit{essential codimension of projections} (see also  \cite{AS94, ASS94}), and more
recently, were  reinterpreted in the light of restricted diagonalization  by Loreaux \cite{L19} to give a characterization of  $\cU_\cJ(\cH)$-diagonalization of  projections. 

This, in turn,  allowed  the characterization
of those finite spectrum normal operators that are  $\cU_\cJ(\cH)$-diagonalizable provided by Loreaux in the same work \cite{L19}. 
It is interesting to notice that the motivations   for this characterization are related with Arveson's index obstruction \cite{Arv07, KL17} satisfied by the diagonals of certain normal operators with finite spectrum, which is well known to be a generalization of Kadison's integer condition \cite{Kad02}  for the diagonals of self-adjoint projections (for related work on diagonals of operators and index obstructions see \cite{ArvKad06, BJ15, KW10, LW16, MT19}). 
Moreover, following the line of previous research on these topics, Loreaux showed the deep relations between the study of several questions raised in \cite{BPW16} and the work  \cite{BDF73} on unitary equivalence modulo compact operators.

\medskip

The contents of this paper are as follows. In Section \ref{Prelim} we present the necessary background on operator ideals on Hilbert spaces,  essential codimension of projections and  restricted diagonalization.

In Section \ref{restricted diagonalization},
we give  necessary and sufficient conditions for an arbitrary (normal) diagonalizable operator $A$ to be  $\cU_\cJ(\cH)$-diagonalizable with respect the orthonormal basis $\e$ (Theorem \ref{main result}). Let $\{P_n\}_{n=1}^N $ ($N\in\N$ or $N=\infty$) denote the spectral projections of $A$ associated to its point spectrum (eigenvalues). Then our sufficient condition for restricted diagonalization can be stated as follows: there exists a family $\{E_n\}_{n=1}^N$ of projections in the diagonal algebra $\cD$ such that 
\beq\label{eq nuestras condiciones intro}
 \sum_{n = 1}^N  (I-E_n) P_n \in \cJ \, \, \, \, \text{  and  } \, \, \, \,  \sum_{n =1}^N E_n(I- P_n ) \in \cJ,
\eeq 
where both series are assumed to converge in the weak operator topology  when $N=\infty$. 

We observe   that this condition is not merely analytic,   it is also geometric in nature. Indeed, it allows us to define part of the unitary operator that diagonalizes the operator $A$ by using a technique to construct local cross sections in many examples of homogeneous spaces of unitary groups (see, e.g. \cite{ACR19, AL8, ED13}). Usually this technique is adapted to each specific example of homogeneous space, though it is based on the well-known fact (which goes back at least to \cite{RN}) that two orthogonal projections lying at distance less than one must be unitary equivalent. We also remark here that an extension of  previous work \cite{EC10} on  $\cU_\cJ(\cH)$-orbits of partial isometries and the essential codimension plays a key role in the proof of our sufficient condition.

In case $N\in\N$ the condition in Eq. (\ref{eq nuestras condiciones intro})  is also necessary, so it complements  Loreaux's characterization for $\cU_\cJ(\cH)$-diagonalization of finite spectrum operators (see Theorem \ref{teo1 L19}). 
Furthermore, we  show that under the assumption that $\cJ$ is an \textit{arithmetic mean closed ideal} (see \cite{Dyk04, KW11}), the condition in Eq. \eqref{eq nuestras condiciones intro} turns out to be also a necessary condition for restricted diagonalization, when $N=\infty$.  In particular, 
Eq. (\ref{eq nuestras condiciones intro}) gives a complete characterization of those  diagonalizable operators which are $\cU_\cJ(\cH)$-diagonalizable for an arbitrary arithmetic mean closed operator ideal $\cJ$.  
We conclude the section with some  miscellaneous remarks and consequences.

In Section \ref{structure of dj} we obtain two results related with the way that the self-adjoint part of $\cD_\cJ$ sits inside the self-adjoint part of $\cJ$ (Theorems \ref{teo una pregunta1} and \ref{teo una pregunta2}).  By using previous results on restricted diagonalization, we further obtain a characterization of all the possible $\cU_\cJ(\cH)$-diagonalizations of arbitrary  $\cU_\cJ(\cH)$-diagonalizable operators (Theorem \ref{teo caract de las posibles diagonalizaciones}). 

\section{Preliminaries}\label{Prelim}

Let $\cH$ be a separable infinite-dimensional complex  Hilbert space and let $\cB(\cH)$ be the algebra of bounded operators acting on $\cH$. 
For an arbitrary operator $A \in \cB(\cH)$, we denote by $R(A)$ and $N(A)$ the range and nullspace of $A$, respectively.

\medskip

\noi \textbf{Operator ideals on Hilbert spaces.} Our main references here are the classic book by Gohberg and Krein \cite{GK60}, and the more recent approach in the work by Dykema, Figiel, Weiss, and Wodzicki \cite{Dyk04}. By an \textit{operator ideal} we mean a two-sided ideal $\cJ$ of $\cB(\cH)$. We say that the ideal is \textit{proper} when 
$\{0\}\neq \cJ\neq \cB(\cH)$. A result that goes back to Calkin  \cite{Cal41} states the inclusions  $\cF(\cH)\subseteq \cJ  \subseteq \cK(\cH)$ for a proper operator ideal $\cJ$, where $\cF(\cH)$ and $\cK(\cH)$ denote the ideals of finite-rank operators and compact operators on $\cH$, respectively.

Let $c_0\da=(c_0(\N)^+)\da$ denote the set of non-negative, non-increasing sequences in $c_0=c_0(\N)$. 
A \textit{characteristic set} $\Sigma$ is a positive cone of $c_0\da$, which  is hereditary  and invariant by ampliations. The latter means that given $\SE{\al}{n}\in \Sigma$, the sequence $\SE{(D_m\al)}{n}$ defined by
$$
(D_m\al)_n=\al_k  \, \, \, \text{ if } \, \, \, (k-1)m+1 \leq n \leq km, \, \, \, k \geq 1;
$$
also satisfies $\SE{(D_m\al)}{n} \in \Sigma$ for all $m \geq 1$.
It is well known  that there is a lattice isomorphism between  
characteristic sets and proper operator ideals (see, e.g. \cite[Thm. 4.2]{Dyk04}). Recall that the singular values of an operator $A\in \cK(\cH)$ are the eigenvalues of 
$|A|=(A^*A)^{1/2}$, 
counting multiplicities and arranged in a non-increasing sequence $s(A):=\{ s_n(A)\}_{n\geq 1}$.  Then one can assign to each characteristic set $\Sigma$ the proper operator ideal
$$
\cJ(\Sigma)=\{A\in \cK(\cH):  s(A) \in\Sigma\}.
$$ 
 Conversely, the inverse maps a proper operator ideal $\cJ$ to the characteristic set defined by 
$
\Sigma(\cJ)=\{s(A)\in c_0\da  :  A\in\cJ\}
$.

There are several properties of operator ideals that can be described in terms of its associated characteristic sets. An example of this situation is the notion of \textit{arithmetic mean closed operator ideal} that we now recall for later use. Given a sequence 
$\alpha=\SE{\al}{n}\in c_0\da$, the associated arithmetic mean sequence $\alpha_a=\SE{(\al_a)}{n}\in c_0\da$ is defined by 
$$
(\alpha_a)_n=\frac{1}{n}\ (\alpha_1+\ldots+\alpha_n), \, \, \,  n\geq 1\,.
$$ 
Given two sequences $\al=\SE{\al}{n} \in c_0\da$, $\beta=\SE{\beta}{n} \in c_0\da$, we write $\al=O(\beta)$ when $\al_n\leq M \beta_n$ for some constant $M>0$ and all $n\geq 1$.
Then for a proper operator ideal $\cJ\subset\cB(\cH)$, we consider the arithmetic mean closure of $\cJ$, which is denoted by $\cJ^{-\rm am}$, and is defined as the proper operator ideal given by
$$
\cJ^{-\rm am}=\{A\in\cK(\cH)\ : \ \exists \, \alpha\in\Sigma(\cJ)\ , \  s(A)_a= O(\alpha_a)\}\,.
$$
A proper operator ideal $\cJ$ is called \textit{arithmetic mean closed} if $\cJ^{-\rm am}=\cJ$. This notion becomes relevant due to the remarkable characterization of commutator spaces of operator ideals in terms of arithmetic means \cite{Dyk04}. We also refer to \cite{KW11} for a thorough investigation of arithmetic mean closed operator ideals. 

For instance, note that the ideal of finite-rank operators $\cF(\cH)$ is not 
arithmetic mean closed. Furthermore, it is not difficult to check that $\cF(\cH)^{- \rm am}=\fS_1(\cH)$, where $\fS_1(\cH)$ denotes the ideal of trace class operators. 

\begin{exas}\label{exam amc oideals}
 We list some  examples of arithmetic mean closed operator ideals.  
 Let $c_{00}=c_{00}(\N)$ be the real vector space consisting of all sequences with a finite number of nonzero terms.  A \textit{symmetric norming function} is a norm $\Phi: c_{00} \to \R$ satisfying the following properties: $\Phi(1,0,0,\ldots)=1$ and $\Phi(a_1, a_2 , \ldots , a_n, 0, 0 , \ldots)=\Phi(|a_{\sigma(1)}|, |a_{\sigma(2)}| , \ldots , |a_{\sigma(n)}|, 0, 0 , \ldots)$, where  $\sigma$  is any permutation of the integers $1,2,\ldots, n$ and $n\geq 1$.  Then, using the singular values of any  compact operator $A$ one can define: 
\[  \| A\|_{\Phi}:= \sup_{k \geq 1} \Phi (s_1(A), s_2(A), \ldots, s_k(A), 0, 0 , \ldots) \in [0,\infty].   \] 
 It turns out that 
\[  \fS_{\Phi}(\cH):= \{  \,   A \in \cK(\cH)    \, : \, \|A \|_{\Phi} < \infty   \, \} \]
is an arithmetic mean closed operator ideal as a consequence of the dominance property for singular values (see \cite[p.82]{GK60}). This includes the Schatten ideals $\fS_p(\cH)$ ($1\leq p \leq \infty$) if one takes $\Phi_p=\| \, \cdot \, \|_{\ell^p}$; in particular $\fS_\infty(\cH)=\cK(\cH)$ are the compact operators and $\Phi_\infty=\| \, \cdot \, \|_{\ell^\infty}$. 
Other examples  of arithmetic mean closed operator ideals are given by particular classes of Lorentz ideals, Marcinkiewicz ideals and Orlicz ideals;  see  \cite[Section 4.7]{Dyk04} for their definitions in terms of  characteristic sets and the precise statements about when they are arithmetic mean closed.
\end{exas}

Let $\cP=\{ P_n\}_{n=1}^N$ ($N\in\N$ or $N=\infty$) be a sequence of mutually orthogonal (self-adjoint) projections. The \textit{pinching operator induced by $\cP$} of an operator $A \in \cB(\cH)$ is given by
$$
\cC_\cP(A)=\sum_{n=1}^N P_n\,A\,P_n \, .
$$ 
In the case in which $N=\infty$, the above series is convergent in the strong operator topology (SOT). 

\begin{rem}\label{pinching properties}
Pinching operators have the following properties: 
\begin{itemize}
\item[i)]  If $A \in \cK(\cH)$, then   $\cC_\cP(A)\in\cK(\cH)$. Furthermore, in this case, the series defining $\cC_\cP(A)$ converges in the operator norm and
$$\sum_{n=1}^k s_n(\cC_\cP(A)) \leq \sum_{n=1}^k s_n(A),$$
for all $k \geq 1$ (see \cite[Thm. 5.2]{GK60}). In particular, $s(\cC_\cP(A))_a=O(s(A)_a)$. Therefore, if $\cJ$ is a proper operator ideal and $A\in\cJ$, then $\cC_\cP(A)\in\cJ^{-\rm am}$.
\item[ii)] Suppose now that the projections in $\cP=\{ P_n \}_{n \geq 1}$ satisfy  $\mathrm{rank}(P_n)=1$, for all $n \geq 1$, and $\sum_{n=1}^\infty P_n=I$. Then an operator ideal $\cJ$ is arithmetic mean closed if and only if $\cC_\cP(\cJ)\subset \cJ$ (\cite[Thm. 4.5]{KW11}).
\end{itemize}
\end{rem}

\medskip

\noi \textbf{Essential codimension.} Let $P,Q \in \cB(\cH)$ be two (orthogonal) projections  such that $P-Q\in \cK(\cH)$.  Under this assumption, the \textit{essential codimension} $[P:Q]$ of $P$ and $Q$ was introduced by Brown, Douglas and Fillmore \cite{BDF73} as the integer
given by
\begin{equation*}
[P:Q]  =  \begin{cases}
      \Tr(P)- \Tr(Q) &   \Tr(P)<\infty, \, \Tr(Q)<\infty;\\
      \mathrm{Ind}(V^*W) &    \Tr(P)= \Tr(Q)=\infty, WW^*=P,\, VV^*=Q,\, V^*V=W^*W=I.
					\end{cases}       
\end{equation*}
One can check that the operator $V^*W$ is Fredholm, and its index does not depend on the isometries $W,V$ such that $WW^*=P$ and $VV^*=Q$. Notice that the assumption $P-Q \in \cK(\cH)$ implies that the operator $QP|_{R(P)}:R(P)\to R(Q)$ is Fredholm. Furthermore, the essential codimension can be also computed as its Fredholm index 
\begin{align}
[P:Q]& =\mathrm{Ind}(QP|_{R(P)}:R(P)\to R(Q)) \label{index pair}\\
& = \dim(N(Q)\cap R(P)) - \dim(R(Q)\cap N(P)) \nonumber.
\end{align}
Avron, Seiler and Simon \cite{ASS94} defined the notion of \textit{index of a pair of projections} $(P,Q)$ as the previous index,  whenever the operator $QP|_{R(P)}:R(P)\to R(Q)$ is Fredholm.  In this case, the pair of projections $(P,Q)$ is said to be a \textit{Fredholm pair}.    
Simpler proofs of their results on the index of a pair of projections can be found in \cite{AS94}.  We observe that the notion of index of a Fredholm pair of projections  does not require  the difference of the projections being compact, which makes it  more general than the notion of essential codimension. 

From now on, we write $[P:Q]$ for the index in Eq. (\ref{index pair}) of a Fredholm pair of projections $(P,Q)$, and we shall call it the essential codimension of $P$ and $Q$, even when $P-Q \notin \cK(\cH)$.

\begin{rem}\label{prop ess cod}
We collect here some  facts on the essential codimension. The first two items follow easily, and the third one is proved in \cite[Thm. 3.4]{ASS94}.
\begin{itemize}
\item[i)] $[P:Q]=-[Q:P]$.
\item[ii)] Let $(P_i,Q_i)$, $i=1,2$, be two Fredholm pairs of projections.  If  $P_1P_2=0$ and $Q_1Q_2=0$, then $(P_1+ P_2,Q_1+Q_2)$ is a Fredholm pair and
$
[P_1 + P_2:Q_1 + Q_2]=[P_1:Q_1] + [P_2:Q_2].
$ 
\item[iii)] If $(P,Q)$ and $(P,R)$ are Fredholm pairs, 
and either $Q-R$ or $P-Q$ are compact, then $(P,R)$ is a Fredholm pair and 
$[P:R]=[P:Q] + [Q:R]$.
\end{itemize}
\end{rem}

\medskip

\noi \textbf{Restricted diagonalization.} In  this work, we fix $\e=\{ e_n\}_{n \geq 1}$ an orthonormal basis of $\cH$. An operator $A$ is  \textit{diagonal} (with respect to the fixed basis $\e$)  if   $\PI{Ae_n}{e_m}=\delta_{nm} \la_{n}$, for some bounded sequence of complex numbers $\{ \la_n\}_{n \geq 1}$. The algebra of diagonal operators is then given by 
$$\cD=\{ A\in\cB(\cH) :  A \text{ is a diagonal operator} \}\,.$$
 More generally, we say that an operator $A$ is \textit{diagonalizable}
if there exists a unitary $U$ such that $UAU^*\in\cD$. Notice that this notion does not depend on our choice of the fixed orthonormal basis $\e$.

Given $A \in \cB(\cH)$, we denote by 
$\sigma(A)$ (resp. $\sigma_p(A)$) the spectrum of $A$ (resp. point spectrum of $A$). 
Notice that a diagonalizable operator  must be normal. 
Conversely, 
a normal operator $A$ acting on the separable Hilbert space $\cH$, which has a spectral measure $E$, turns out to be diagonalizable if and only if $E(\sigma(A)\setminus \sigma_p(A))=0$. One can also check that $\sigma(A)=\overline{\sigma_p(A)}$, for every diagonalizable operator. 

Associated to each  diagonalizable operator $A$, there is a family  $\{\la_n \}_{n=1}^N$ $(N\in\N$ or $N= \infty)$, consisting of its distinct eigenvalues,   and the corresponding spectral projections  $\{ P_n\}_{n=1}^N$, which are defined by $P_n=E(\{ \la_n \})$. By applying the  spectral theorem for normal operators, 
we see that the
family $\{P_n\}_{n=1}^N$ is a decomposition of the identity, i.e. $P_n^*=P_n$, $P_n P_m=\delta_{nm} P_n$ and $\sum_{n=1}^N P_n=I$, where the series converges in the SOT when $N=\infty$. 
Also, the operator $A$ can be expressed as    
$$
A=\sum_{n=1}^N \la_n \, P_n \, ,
$$   
where  the above series is convergent in the SOT when $N=\infty$. 

\medskip

We denote by $\cU(\cH)$ the full unitary group of the Hilbert space $\cH$. Given an operator ideal 
$\cJ$, we consider the group defined by
$$
\cU_\cJ(\cH):=\{ U \in \cU(\cH): U- I \in \cJ \}.
$$
These types of groups are non trivial and strictly smaller subgroups  of the full unitary group if $\cJ$ is a proper operator ideal.
They can be studied  from a geometrical 
viewpoint for many operator ideals. For instance, if the operator ideal $\cJ$ admits a complete norm $\| \, \cdot \,\|_\cJ$ stronger than the operator 
norm, then $\cU_\cJ(\cH)$ turns out to be a real Banach-Lie group in the topology defined by $\| \, \cdot \, \|_\cJ$ (see \cite[Prop. 9.28]{B}).  For example, this condition is satisfied by the Schatten ideals $\fS_p(\cH)$ $(1\leq p \leq \infty)$. 

The following notion was introduced by Belti\c{t}$\breve{\text{a}}$, Patnaik and Weiss \cite{BPW16}:

\begin{fed}\label{defi restric diag diag op}
Let $\cJ$ be an operator ideal. An operator $A$ is called  \textit{$\cU_\cJ(\cH)$-diagonalizable (with respect to $\e$)}, or simply, \textit{restricted diagonalizable}, if there exists a unitary $U \in \cU_\cJ(\cH)$ such that $UAU^* \in \cD$.
\end{fed}

Let $\cI,\,\cJ$ be two operator ideals. It will be useful to consider as in \cite{BPW16} the  set of all $\cU_\cJ(\cH)$-diagonalizable operators belonging to $\cI$, i.e.
$$
\cD_{\cJ,\cI}:=\{ UDU^* : D \in \cD \cap \cI, \, U \in \cU_\cJ(\cH) \}=\bigcup_{U \in \cU_\cJ(\cH)} U(\cD \cap \cI)U^* \subseteq \cI.
$$ 
  In particular, we put 
$\cD_\cJ:=\cD_{\cJ,\cJ}$, and note that $\cD_{\cJ,\,\cB(\cH)}$ is the set of all $\cU_\cJ(\cH)$-diagonalizable operators.

The following result will play a key role in restricted diagonalization. It is actually a  reformulation  of a recent result by Kaftal and Loreaux \cite[Prop. 2.7]{KL17} (see also \cite[Prop. 2.3]{L19}).

\begin{pro}\label{cond proj res diag}
Let $P$, $Q$ be orthogonal projections  and let $\cJ$ be  a proper operator ideal. Then $P- Q \in \cJ$
 and $[P:Q]=0$ if and only if there is an unitary operator $U \in \cU_\cJ(\cH)$  such that $Q = UPU^*$.
\end{pro}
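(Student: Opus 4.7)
The plan is to prove the two implications separately. For the easier direction $(\Leftarrow)$, assuming $Q = UPU^*$ with $U \in \cU_\cJ(\cH)$, I would write
\[
P - Q \;=\; P(I - U^*) + (I - U)PU^*,
\]
so both summands lie in $\cJ$ (using $I - U \in \cJ$ and $I - U^* = -U^*(U - I) \in \cJ$). For the vanishing of the essential codimension, I would compose the Fredholm operator $QP|_{R(P)} \colon R(P) \to R(Q)$ with the unitary isomorphism $U^*|_{R(Q)} \colon R(Q) \to R(P)$; the result simplifies to $PU^*P|_{R(P)} = I_{R(P)} + P(U^* - I)P|_{R(P)}$, a compact perturbation of the identity on $R(P)$, hence Fredholm of index zero. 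Since $U^*|_{R(Q)}$ has index zero, $[P:Q] = 0$ follows from Eq.~\eqref{index pair}.

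For the converse, assuming $P - Q \in \cJ$ and $[P:Q] = 0$, I would follow the classical polar-decomposition approach. Set
\[
X \;=\; QP + (I-Q)(I-P) \;=\; I + (Q - P)(2P - I),
\]
so that $X - I \in \cJ$. Direct multiplication yields the intertwining identities $XP = QX$ and $X^*Q = PX^*$, which together give $X^*XP = PX^*X$; by continuous functional calculus $|X|$ then commutes with $P$ (and symmetrically $|X^*|$ with $Q$). A routine decomposition shows
\[
N(X) = \bigl(R(P) \cap N(Q)\bigr) \oplus \bigl(N(P) \cap R(Q)\bigr), \quad N(X^*) = \bigl(R(Q) \cap N(P)\bigr) \oplus \bigl(N(Q) \cap R(P)\bigr),
\]
and the hypothesis $[P:Q] = 0$ translates, via Eq.~\eqref{index pair}, into the dimension equality $\dim(R(P) \cap N(Q)) = \dim(R(Q) \cap N(P))$.

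Now let $X = V|X|$ be the polar decomposition. From $|X|^2 - I = (X - I) + (X - I)^* + (X - I)^*(X - I) \in \cJ$ and the bounded invertibility of $|X| + I$, the identity $|X| - I = (|X|^2 - I)(|X| + I)^{-1}$ gives $|X| - I \in \cJ$, so $V - I = (X - I) - V(|X| - I) \in \cJ$. Since $X = I + K$ with $K \in \cK(\cH)$ has Fredholm index zero, $\dim N(X) = \dim N(X^*)$, which combined with the previous dimension equality forces also $\dim(N(P) \cap R(Q)) = \dim(N(Q) \cap R(P))$. I would then choose unitaries $V_1 \colon R(P) \cap N(Q) \to R(Q) \cap N(P)$ and $V_2 \colon N(P) \cap R(Q) \to N(Q) \cap R(P)$, set $W = V_1 \oplus V_2 \colon N(X) \to N(X^*)$, and define $U$ to act as $V$ on $N(X)^\perp$ and as $W$ on $N(X)$. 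Then $U$ is unitary, $U - V$ has finite rank, and hence $U \in \cU_\cJ(\cH)$. The identity $V|X|P = VP|X| = QV|X|$ combined with the density of $R(|X|)$ in $N(X)^\perp$ yields $VP = QV$ on $N(X)^\perp$; on $N(X)$, the intertwining $WP|_{N(X)} = Q|_{N(X^*)}W$ holds by construction of $W$. Therefore $UP = QU$, and $UPU^* = Q$.

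The main obstacle lies in this last pasting step: extending the polar-decomposition partial isometry on the finite-dimensional kernels so as to produce a unitary that simultaneously intertwines $P$ with $Q$. This is precisely where the essential codimension hypothesis $[P:Q] = 0$ enters decisively, as it provides the dimension matching of the kernel summands of $X$ and $X^*$ needed for the extension $W$ to exist.
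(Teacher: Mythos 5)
The paper itself offers no proof of this proposition; it cites it as a reformulation of Kaftal--Loreaux \cite[Prop.\ 2.7]{KL17} (see also \cite[Prop.\ 2.3]{L19}), so there is no in-paper argument to compare against. Your proof is correct, and it is the classical polar-decomposition argument that underlies those references, tracing back through Carey \cite{C85}, Str$\breve{\text{a}}$til$\breve{\text{a}}$--Voiculescu \cite{SV78} and Brown--Douglas--Fillmore \cite{BDF73}: form the Halmos-type core operator $X=QP+(I-Q)(I-P)=I+(Q-P)(2P-I)$, so $X-I\in\cJ$ and $XP=QX$, $X^*Q=PX^*$; deduce $[|X|,P]=0$ and $[|X^*|,Q]=0$ by functional calculus; use the polar decomposition $X=V|X|$ and $|X|-I\in\cJ$ to get the partial isometry $V$ with $V-I\in\cJ$ and $VP=QV$; then extend $V$ across the finite-dimensional kernel $N(X)$ to a genuine unitary $U\in\cU_\cJ(\cH)$ still intertwining $P$ and $Q$, using $[P:Q]=0$ to match dimensions. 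All the individual computations check out (including the easy direction, where composing $QP|_{R(P)}$ with the unitary $U^*|_{R(Q)}$ gives $PU^*P|_{R(P)}$, a compact perturbation of the identity).

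One small redundancy worth noting: since intersection of subspaces is commutative, $R(P)\cap N(Q)=N(Q)\cap R(P)$ and $N(P)\cap R(Q)=R(Q)\cap N(P)$, so your two written dimension equalities are literally the same statement, and in fact $N(X)=N(X^*)$ as subspaces of $\cH$; the appeal to $\mathrm{Ind}(X)=0$ to derive the second equality from the first is therefore unnecessary. This does not affect the validity of your argument, only its economy.
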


It is worth mentioning that this result was first obtained for the ideal $\cJ=\cK(\cH)$ in \cite{BDF73}, and also for the Hilbert-Schmidt operators $\cJ=\fS_2(\cH)$ in \cite{SV78}. Later on was generalized for the class of symmetrically-normed ideals in \cite{C85}; see \cite[p. 70]{GK60} for the definition of these ideals.  In particular, we observe that the ideals $\fS_\Phi(\cH)$ defined in Examples \ref{exam amc oideals} are symmetrically-normed ideals.

\begin{rem}
More recently, using the properties of the essential codimension stated in Remark \ref{prop ess cod}, Loreaux showed that Proposition \ref{cond proj res diag} can be reformulated as a result on restricted diagonalization of projections. He showed in  \cite[Corol. 3.1]{L19} that an (orthogonal) projection $P$ is $\cU_\cJ(\cH)$-diagonalizable if and only if there exists a diagonal projection $E\in\cD$ such that $P-E \in \cJ$. 
\end{rem}
	
The above remark was used by Loreaux to prove the following  characterization of restricted diagonalization of finite spectrum normal operators in terms of spectral projections (see \cite[Thm. 3.4]{L19}).

\begin{teo}\label{teo1 L19 original}
Let $\cJ$ be a proper operator ideal. A finite spectrum normal operator is $\cU_\cJ(\cH)$-diagonalizable if and only if each of its spectral projections
differs from a diagonal projection by an element of $\cJ$.
\end{teo}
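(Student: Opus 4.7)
The plan is to prove both implications. For $(\Rightarrow)$, if $UAU^*\in\cD$ for some $U\in\cU_\cJ(\cH)$, let $E_n$ denote the (diagonal) spectral projections of $UAU^*$; then $P_n = U^*E_nU$, and $P_n - E_n = U^*E_n(U-I) + (U^*-I)E_n \in \cJ$, using that operator ideals are closed under adjoints so $U^* - I \in \cJ$ as well.

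For $(\Leftarrow)$, suppose each spectral projection $P_n$ satisfies $P_n - E_n \in \cJ$ for some diagonal projection $E_n$. First I replace the $E_n$ by a compatible family. The diagonal operator $I - \sum_{n=1}^N E_n = \sum_{n=1}^N (P_n - E_n)$ lies in $\cJ \subset \cK(\cH)$, and a compact diagonal operator has finite support; thus the subsets $S_n \subset \N$ indexing the $E_n$ partition $\N$ modulo a finite exceptional set. A finite-rank correction produces mutually orthogonal diagonal projections $\tilde{E}_n$ with $\sum_{n=1}^N \tilde{E}_n = I$ and $P_n - \tilde{E}_n \in \cJ$. Next, the essential codimensions $d_n := [P_n:\tilde{E}_n]$ must be made to vanish individually. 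By the additivity in Remark \ref{prop ess cod}, $\sum_n d_n = [I:I] = 0$, and since $N$ is finite while $\cH$ is infinite dimensional, at least one $\tilde{E}_n$ has infinite rank. Transferring finitely many basis vectors among the $\tilde{E}_n$'s (each rank-one transfer shifts two of the indices by $\pm 1$ while preserving membership in $\cJ$) allows one to independently zero out every $d_n$.

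With $\{\tilde{E}_n\}$ now a diagonal resolution of the identity satisfying the hypotheses of Proposition \ref{cond proj res diag}, for each $n$ there exists $U_n \in \cU_\cJ(\cH)$ with $U_n P_n U_n^* = \tilde{E}_n$. Set $W_n := U_n P_n$, a partial isometry from $R(P_n)$ onto $R(\tilde{E}_n)$ with $W_n - P_n = (U_n - I)P_n \in \cJ$. Define $U := \sum_{n=1}^N W_n$. The identity $W_n^* = U_n^* \tilde{E}_n$ together with the mutual orthogonality of the two families $\{P_n\}$ and $\{\tilde{E}_n\}$ yields $W_n^* W_m = \delta_{nm} P_n$ and $W_n W_m^* = \delta_{nm} \tilde{E}_n$, so $U$ is unitary and $U P_n U^* = \tilde{E}_n$ for every $n$. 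Since $N$ is finite, $U - I = \sum_n (W_n - P_n) \in \cJ$, so $U \in \cU_\cJ(\cH)$, and $UAU^* = \sum_n \la_n \tilde{E}_n \in \cD$.

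The principal obstacle is the index-adjustment step, because Proposition \ref{cond proj res diag} requires vanishing essential codimension for every pair $(P_n, \tilde{E}_n)$ simultaneously; arranging this exploits both the global constraint $\sum_n d_n = 0$ and the presence of some $\tilde{E}_n$ of infinite rank as a reservoir. Once the index condition is in place, the orthogonality of the two decompositions of the identity makes the assembly of the individual conjugating unitaries into a single element of $\cU_\cJ(\cH)$ routine.
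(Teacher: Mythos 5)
Your proof is correct, and it is essentially a reconstruction of Loreaux's argument for \cite[Thm. 3.4]{L19}, which the paper cites rather than reproves; the three steps you carry out (orthogonalizing the diagonal projections into a resolution of the identity by a finite‑rank correction, using $\sum_n[P_n:\tilde E_n]=[I:I]=0$ together with an infinite‑rank $\tilde E_n$ as reservoir to zero every essential codimension, and then applying Proposition \ref{cond proj res diag} and gluing the resulting partial isometries $W_n=U_nP_n$) correspond exactly to the lemmas \cite[Lemmas 3.2, 3.3]{L19} that the paper itself invokes later in the proof of Theorem \ref{main result}. The only phrasing to tighten is ``independently zero out every $d_n$'': the transfers are not independent because of the global constraint $\sum_n d_n=0$, but the reservoir argument you describe does resolve this correctly.
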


The next result from \cite[Prop. 3.5]{BPW16}  will be useful in what follows.

\begin{pro}\label{pro 35 BPW16}
Let  $\mathrm{e}=\{e_n\}_{n\geq 1}$, $\mathrm{f}=\{f_n\}_{n\geq 1}$ be two orthonormal bases in $\cH$. Suppose that there exists $\delta>0$ such that $\|e_n-\gamma\,f_m\|\geq \delta$, for all $m,\,n\geq 1$ and $|\gamma|=1$. Then, for every operator $X$ which is diagonal with respect to $\mathrm e$ and with spectral multiplicities one, and for every $W\in\cU_{\cK(\cH)}(\cH)$, we have that $WXW^*$ is not a diagonal operator with respect to $\mathrm f$.
\end{pro}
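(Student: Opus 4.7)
\medskip

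\noindent\textbf{Proof plan.} I would argue by contradiction: assume that some $W\in\cU_{\cK(\cH)}(\cH)$ makes $Y := WXW^*$ diagonal with respect to $\mathrm f$, and seek a contradiction to the separation hypothesis $\|e_n-\gamma f_m\|\geq \delta$.

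The first step exploits the spectral multiplicity one of $X$. Let $\{\lambda_n\}_{n\geq 1}$ be the (pairwise distinct) eigenvalues of $X$, so that $Xe_n=\lambda_n e_n$ for every $n\geq 1$. Since $Y$ is unitarily equivalent to $X$, it is normal with the same pure point spectrum and each eigenvalue still simple. The identity $Y(We_n) = WXe_n = \lambda_n\, We_n$ shows that $We_n$ spans the one-dimensional $\lambda_n$-eigenspace of $Y$. On the other hand, by the diagonality assumption $\{f_m\}_{m\geq 1}$ is also an eigenbasis for $Y$, and each $f_m$ belongs to exactly one of its one-dimensional eigenspaces. Matching the two eigenbases through these simple eigenspaces produces a bijection $\sigma:\N\to\N$ and unimodular scalars $\gamma_n\in\T$ with
\[ We_n = \gamma_n\, f_{\sigma(n)}, \qquad n\geq 1. \]

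The second step uses compactness. Since $W-I\in\cK(\cH)$ and $e_n\to 0$ weakly in $\cH$, compactness of $W-I$ yields $\|(W-I)e_n\|\to 0$ as $n\to\infty$. Combining this with the identity of step one,
\[ \|e_n-\gamma_n f_{\sigma(n)}\| \;=\; \|(I-W)e_n\| \;\xrightarrow[n\to\infty]{}\; 0, \]
in direct contradiction with the standing hypothesis that $\|e_n-\gamma f_m\|\geq \delta>0$ for all $m,n\geq 1$ and every $|\gamma|=1$.

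The only bookkeeping of substance is the construction of the bijection $\sigma$ and the phases $\gamma_n$ in step one; it relies solely on every eigenvalue of $Y$ being simple together with both $\{We_n\}_{n\geq 1}$ and $\{f_m\}_{m\geq 1}$ being eigenbases of $Y$, so I do not foresee any serious obstacle there. Once this matching is in hand, the compactness argument of step two is automatic and closes the proof.
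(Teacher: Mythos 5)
Your proof is correct, and it is a clean, self-contained argument. Note that the paper itself does not prove this proposition; it simply quotes it as \cite[Prop.\ 3.5]{BPW16}. Your argument is the natural one: the simple spectrum forces $\{We_n\}_{n\geq 1}$ and $\{f_m\}_{m\geq 1}$ to match up one-dimensional eigenspaces of $Y=WXW^*$, yielding $We_n=\gamma_n f_{\sigma(n)}$ with $|\gamma_n|=1$, and then $\|(I-W)e_n\|\to 0$ (compactness of $W-I$ plus $e_n\rightharpoonup 0$) contradicts the uniform separation hypothesis. The only detail worth making explicit in a final write-up is why $\sigma$ is well defined: $We_n$ lies in the $\lambda_n$-eigenspace of $Y$, which is one-dimensional, and since $\{f_m\}$ is a complete eigenbasis of $Y$, exactly one $f_m$ lies in that eigenspace. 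Everything else is as you describe.
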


\section{Restricted diagonalization}\label{restricted diagonalization}

We present  our  results on restricted diagonalization of operators with finite and infinite spectrum separately. First, we consider the case  of finite spectrum normal operators, which 
can be seen as a reformulation of Loreaux's result. Second, we consider in our main result the case of  (normal) diagonalizable operators with infinite point spectrum.

\begin{teo}[Finite  spectrum]\label{teo1 L19}
Let $\cJ$ be a proper operator ideal and let $A\in\cB(\cH)$ be a finite spectrum normal operator with spectral projections $\{P_n\}_{n=1}^N$. Then the following conditions are equivalent
\bit 
\item[i)] $A$ is $\cU_\cJ(\cH)$-diagonalizable; 
\item[ii)] There exist diagonal projections $E_n\in\cD$ such that 
$E_n-P_n\in\cJ$, $1\leq n\leq N$; 
\item[iii)] There exist diagonal projections $E_n\in\cD$, $1\leq n\leq N$, such that 
\begin{equation}\label{cond serie debil finitos}
 \sum_{n = 1}^N  (I-E_n) P_n \in \cJ \, \, \, \, \text{  and  } \, \, \, \,  \sum_{n =1}^N E_n(I- P_n ) \in \cJ. 
\end{equation}   
\eit
\end{teo}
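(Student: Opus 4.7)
The plan is to establish the chain (i) $\Leftrightarrow$ (ii) $\Leftrightarrow$ (iii). The equivalence (i) $\Leftrightarrow$ (ii) is essentially a restatement of Loreaux's Theorem \ref{teo1 L19 original}, since that result asserts that a finite spectrum normal operator is $\cU_\cJ(\cH)$-diagonalizable precisely when each spectral projection $P_n$ differs from some diagonal projection by an element of $\cJ$. So the real content is the equivalence (ii) $\Leftrightarrow$ (iii).

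The direction (ii) $\Rightarrow$ (iii) is immediate: if $E_n - P_n \in \cJ$, then the two-sided ideal property lets me write $(I-E_n)P_n = -(E_n-P_n)P_n \in \cJ$ and $E_n(I-P_n) = (E_n-P_n)(I-P_n) \in \cJ$, and since $N$ is finite the two finite sums are in $\cJ$. For (iii) $\Rightarrow$ (ii), I would first multiply $\sum_n (I-E_n)P_n \in \cJ$ on the right by $P_m$ and use $P_nP_m = \delta_{nm}P_n$ to obtain $(I-E_m)P_m \in \cJ$ for each $m$. Summing these over $m$ gives $I - \sum_n E_n P_n \in \cJ$, which when combined with $\sum_n E_n(I-P_n) = \sum_n E_n - \sum_n E_n P_n \in \cJ$ forces $S := \sum_n E_n \equiv I \pmod \cJ$.

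The hurdle is that the $E_n$ in (iii) are not assumed to be mutually orthogonal, so one cannot directly extract $E_m(I-P_m)$ from the second series. To overcome this I exploit that $S$ is diagonal and $S - I \in \cJ \subseteq \cK(\cH)$. The diagonal entries $S_{ii} = \sum_n (E_n)_{ii}$ are non-negative integers (each $(E_n)_{ii}\in\{0,1\}$) tending to $1$, hence $S_{ii}=1$ for all but finitely many $i$. On that finite exceptional set I reassign each $e_i$ to a single $E_n$, producing pairwise orthogonal diagonal projections $E_n'$ with $\sum_n E_n' = I$ and $E_n - E_n' \in \cF(\cH) \subseteq \cJ$. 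The finite-rank perturbation preserves both conditions in (iii) for the primed family.

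Once the $E_n'$ are orthogonal, I left-multiply $\sum_n E_n'(I-P_n)\in \cJ$ by $E_m'$ and use $E_m' E_n' = \delta_{mn} E_m'$ to conclude $E_m'(I-P_m)\in\cJ$. Together with $(I-E_m')P_m \in \cJ$ (obtained as before), the algebraic identity
\[
E_m' - P_m \;=\; E_m'(I - P_m) - (I - E_m') P_m
\]
gives $E_m' - P_m \in \cJ$, which is (ii). The main obstacle, therefore, is circumventing the absence of orthogonality in (iii); the bridging idea is the integrality of the diagonal entries of $\sum_n E_n$ combined with the inclusion $\cJ\subseteq\cK(\cH)$, which makes an orthogonalization possible up to a finite-rank correction.
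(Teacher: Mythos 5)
Your proof is correct, and your treatment of the key step (iii) $\Rightarrow$ (ii) takes a genuinely different, somewhat more elementary route than the paper for the crucial orthogonalization. Both proofs share the overall skeleton: reduce (iii) to (ii) by perturbing the family $\{E_n\}$ into a mutually orthogonal family $\{E_n'\}$ with $E_n - E_n' \in \cF(\cH)$, then extract $E_m'(I-P_m)\in\cJ$ and $(I-E_m')P_m\in\cJ$ by one-sided multiplications and combine via $E_m'-P_m = E_m'(I-P_m)-(I-E_m')P_m$. Where you diverge is in the justification that such a finite-rank orthogonalization exists. The paper proceeds operator-algebraically: it shows $E_kE_m$ ($k\neq m$) is a finite-rank projection by deriving the inequality $0\leq E_kE_m\leq K'$ for a compact $K'\in\cJ$, and then performs the Gram--Schmidt-style recursion $E_n'=E_n - E_n(E_1'+\cdots+E_{n-1}')$ from \cite{L19}. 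You instead observe that the diagonal function $i\mapsto S_{ii}=\sum_n (E_n)_{ii}$ is integer-valued and, since $S-I\in\cJ\subseteq\cK(\cH)$ is a diagonal compact operator, tends to $1$; hence $S_{ii}=1$ off a finite set, where the $E_n$ already tile $I$, and you only need a direct basis-vector reassignment on the finite exceptional set. This buys a more concrete and transparent argument, yielding directly an orthogonal decomposition of the identity $\sum_n E_n'=I$ in $\cD$ without invoking an operator inequality or the algebraic recursion; the paper's route is closer in spirit to Loreaux's original argument and reuses machinery that the authors also need in the infinite-spectrum case. Both are valid.
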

\begin{proof}
The equivalence of items $i)$ and $ii)$ is due to Loreaux  as we have already stated in Theorem \ref{teo1 L19 original}.
 Assume that item $ii)$ holds, that 
is, there are diagonal projections $E_n\in\cD$ such that $P_n - E_n \in \cJ$, for all $n=1, \ldots, N$. 
Then, $(I-E_n)P_n=(P_n-E_n)P_n \in \cJ$ and $E_n(I-P_n)=E_n(E_n-P_n)\in \cJ$.
These facts clearly imply that both conditions in Eq. (\ref{cond serie debil finitos}) are satisfied. 
To prove the reverse direction, we use that
$\sum_{n=1}^N P_n=I$, and note that
$$
\sum_{n=1}^N E_n=\sum_{n=1}^N E_n(I-P_n) -\sum_{n=1}^N (I-E_n)P_n + I= I+ K,
$$
for some operator $K \in \cJ$. Since $\{ E_n \}_{n=1}^N$ are commuting projections, we have for $k \neq m$,
$$E_kE_m\,\sum_{n=1}^N E_n=
2E_kE_m + \sum_{n \neq k,\, m}E_k E_m E_n = E_k E_m + K' 
$$
for $K'=E_kE_mK \in \cJ$. This implies that $0\leq E_k E_m \leq K'$, and since $K'$ is compact, the projection $E_k E_m$ must have finite rank. Following the same argument given in \cite{L19} we construct a family of mutually orthogonal projections $\{ E_n'\}_{n=1}^N$
 as follows: set $E_1'=E_1$ and $E_n'=E_n- E_n(E_1' + \ldots E_{n-1}')$, where $n=1, \ldots , N$. 
Using that $E_k E_m$, $k\neq m$, has finite rank, we get that $E_n'$ and $E_n$ differ by a finite rank operator for each $n=1, \ldots , N$. 
Therefore the conditions in Eq. (\ref{cond serie debil finitos}) still hold if we replace the family $\{ E_n\}_{n=1}^N$ by $\{ E_n'\}_{n=1}^N$.
This allows us to multiply by $E_n'$ and $P_n$ in Eq. (\ref{cond serie debil finitos}) to find that $E_n'(I-P_n)\in \cJ$ and  $(I-E_n')P_n \in \cJ$ for all $n=1, \ldots,N$. Hence, $P_n - E_n'=P_n(I-E_n') - (I-P_n)E_n' \in \cJ$, which is equivalent to have $P_n - E_n \in \cJ$. 
\end{proof}

Actually, the above reformulation of Loreaux's condition in Eq. (\ref{cond serie debil finitos})  might be seen as a motivation to state the next case of diagonalizable operators with infinite point spectrum. We just let   $N \to \infty$ in Eq. (\ref{cond serie debil finitos}), and assume that the limits still belong to a proper 
operator ideal $\cJ$, where the limits can be taken in the 
weak operator topology. 
The resulting conditions are stated as 
follows.

\begin{teo}[Infinite point spectrum]\label{main result}
Let $\cJ$ be a proper operator ideal. Let $A\in\cB(\cH)$ be a  diagonalizable operator with  infinite point spectrum and spectral projections $\SE{P}{n}$. 
Assume that there is a family $\SE{E}{n}$ of diagonal projections such that 
\begin{equation}\label{eq cond caract1}
\sum_{n \geq 1}(I-E_n) P_n \in \cJ \, \, \, \, \text{  and  } \, \, \, \,  \sum_{n \geq  1}E_n (I-P_n)  \in \cJ, 
\end{equation}
where both series are assumed to converge in the WOT. 
Then $A$ is $\cU_\cJ(\cH)$-diagonalizable.

Conversely, if $A$ is $\cU_\cJ(\cH)$-diagonalizable and we assume further that $\cJ$ is arithmetic mean closed, then there is a family $\SE{E}{n}$ of diagonal projections such that the conditions in Eq. \eqref{eq cond caract1} hold.
\end{teo}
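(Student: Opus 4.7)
The plan is to treat the two implications separately: necessity reduces to a pinching computation, while sufficiency requires an orthogonalization step followed by a block polar decomposition.

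\textbf{Necessity.} Suppose $V \in \cU_\cJ(\cH)$ satisfies $VAV^* \in \cD$, and let $\{E_n\}$ denote the spectral projections of $VAV^*$: these are mutually orthogonal diagonal projections with $\sum_n E_n = I$, and $P_n = V^* E_n V$. Writing $V = I + S$ with $S \in \cJ$ and using $(I-E_n)E_n = 0$ and $\sum_n E_n = I$, a direct expansion gives
\[
\sum_n (I-E_n) P_n = \bigl(S^* - \cC_\cE(S^*)\bigr) V
\quad\text{and}\quad
\sum_n E_n(I-P_n) = -S - \cC_\cE(S^*) V,
\]
where $\cC_\cE$ denotes pinching by the orthogonal family $\{E_n\}$. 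Remark \ref{pinching properties}(i) gives $\cC_\cE(S^*) \in \cJ^{-\mathrm{am}}$, and arithmetic mean closure promotes this to $\cJ$; both series therefore lie in $\cJ$.

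\textbf{Sufficiency.} I would first extract several consequences of the hypothesis. From $K_1 := \sum_n (I-E_n) P_n \in \cJ$ and $P_n P_m = \delta_{nm} P_n$ one obtains $(I-E_n) P_n = K_1 P_n \in \cJ$ for each $n$; adding the two hypotheses gives $\sum_n E_n - I \in \cJ$, and squaring this relation (using that diagonal projections commute) yields $\sum_{n\ne m} E_n E_m \in \cJ$. Consequently every off-diagonal product $E_n E_m$ is a compact diagonal projection, hence finite rank---precisely the input needed to run, in the infinite setting, the orthogonalization argument from the proof of Theorem \ref{teo1 L19}. Setting $E_n' := E_n - E_n(E_1' + \cdots + E_{n-1}')$ and absorbing the finite-rank defect $I - \sum_n E_n'$, one obtains mutually orthogonal diagonal projections $\{\tilde E_n\}$ with $\sum_n \tilde E_n = I$ and $P_n - \tilde E_n \in \cJ$ for every $n$.

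The operator $T := \sum_n \tilde E_n P_n$ is then block-diagonal with respect to $\cH = \bigoplus R(P_n) = \bigoplus R(\tilde E_n)$, lies in $I + \cJ$, and is Fredholm of index zero. A compactness argument---$I - T$ acts as the identity on $\bigoplus_n \bigl(R(P_n) \cap R(\tilde E_n)^\perp\bigr)$---forces this direct sum (and, symmetrically, its dual) to be finite-dimensional, so the finitely many nonzero essential codimensions $[P_n : \tilde E_n]$ sum to zero; after a finite sequence of basis-vector swaps between offending pairs, one may assume $[P_n : \tilde E_n] = 0$ for every $n$, so that each block $T_{nn} := \tilde E_n P_n|_{R(P_n)}$ is individually Fredholm of index zero. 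Polar-decomposing each block and extending the resulting partial isometry by a finite-rank identification of kernel with cokernel produces a unitary $U$ with $U P_n U^* = \tilde E_n$, whence $UAU^* = \sum_n \lambda_n \tilde E_n \in \cD$. The main obstacle is verifying $U - I \in \cJ$: a summand-by-summand estimate fails because $\cJ$ is not closed under arbitrary block-diagonal direct sums, so one argues globally, using $T - I \in \cJ$ to deduce $|T| - I \in \cJ$ by continuous functional calculus and thence $V - I = (T - |T|)|T|^{-1} \in \cJ$ on $(\ker T)^\perp$ for the global polar partial isometry $V$, while the finite-rank kernel/cokernel correction $U - V$ lies in $\cJ$ trivially. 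This is where the extension of \cite{EC10} mentioned in the introduction on $\cU_\cJ(\cH)$-orbits of partial isometries is expected to enter.
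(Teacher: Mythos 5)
Your necessity argument is correct and essentially reproduces the paper's: you write the two series as perturbations of the pinching of $U^*-I$ by the spectral projections of $UAU^*$, then invoke arithmetic mean closure (via Remark \ref{pinching properties}) to land back in $\cJ$. The minor slip that ``adding'' the two hypotheses gives $\sum E_n - I\in\cJ$ (it is actually their difference) is harmless.

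The sufficiency direction, however, has a genuine gap at the orthogonalization step, and everything downstream rests on it. From $K_1:=\sum_n(I-E_n)P_n\in\cJ$ you correctly get $(I-E_n)P_n=K_1P_n\in\cJ$ for each $n$, because right multiplication by the fixed $P_n$ kills all other terms in the WOT-convergent series. But there is no analogous localization for the second series: $\sum_m E_m(I-P_m)\in\cJ$ does \emph{not} yield $E_n(I-P_n)\in\cJ$, since the $E_m$ are not (yet) mutually orthogonal and multiplying by $E_n$ does not annihilate the $m\neq n$ terms. Hence $P_n-E_n\in\cJ$, and therefore $P_n-\tilde E_n\in\cJ$, is unproved at the point you use it. The observation that each off-diagonal $E_nE_m$ is a finite-rank projection is not ``precisely the input needed'': in the infinite setting this permits the Gram--Schmidt-style corrections $E_n\mapsto E_n'$ to be nontrivial for infinitely many $n$, in which case neither $\sum_n(\tilde E_n-E_n)(I-P_n)$ nor $T-I=-\sum_n(I-\tilde E_n)P_n$ can be placed in $\cJ$, and the Fredholm-index-zero argument for $T$ collapses. (Your derivation of $\sum_{n\neq m}E_nE_m\in\cJ$ also relies on squaring a WOT-convergent series term-by-term, which requires justification.) What is actually needed is the stronger statement, proved in the paper as Lemma \ref{proj diag} via a compactness/truncation argument, that \emph{all} off-diagonal products $E_nE_m$ with $\max(n,m)$ beyond a threshold $n_0$ vanish; this guarantees the orthogonalization alters only finitely many $E_n$ and, combined with the projection $E^{(N)}=\sum_{n>N}E_n$ from Lemma \ref{proj diag}(iii), lets one truncate both series and conclude $P_n-E_n\in\cJ$ (Lemma \ref{finite diff}). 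With that lemma in hand your strategy---global polar decomposition of $T=\sum\tilde E_nP_n$ and the very clean observation that $\operatorname{ind}T=0$ forces $\sum[P_n:\tilde E_n]=0$---is essentially equivalent to the paper's (which splits $T$ into a tail partial isometry via Proposition \ref{partial isom} plus a finite head handled by \cite[Lemmas 3.2--3.3]{L19}), but as written it does not close.
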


\begin{proof}
See Section \ref{sec prueba main result}.
\end{proof}

\begin{rem}\label{ejems AM ideales}
We point out that Theorem \ref{main result} 
provides with a complete characterization of those normal diagonalizable operators that are $\cU_\cJ(\cH)$-diagonalizable for a proper arithmetic mean closed operator ideal $\cJ$. In Examples \ref{exam amc oideals} we give a list of some of these ideals.
 Finally, we observe here that the assumption that $\cJ$ is arithmetic mean closed cannot be removed in the reverse implication (see Example \ref{counterexample amc}). 
\end{rem}

\begin{rem}\label{rem localizacion} 
Since compact normal operators are diagonalizable, the sufficient conditions in Eq. (\ref{eq cond caract1}) give an answer to \cite[Problem 6.4]{BPW16}, which asked for sufficient conditions for a normal operator in an proper operator ideal $\cJ$ to belong to $\cD_{\cJ}$.

On the other hand, let $\cI$ and $\cJ$ be two operator ideals and assume further that $\cJ$ is a proper ideal. Notice that 
Theorem \ref{main result} also allow us to derive results related with 
$$
\cD_{\cJ,\cI}:=\{ UDU^* : D \in \cD \cap \cI, \, U \in \cU_\cJ(\cH) \}
\subseteq \cI\,.
$$
For example, if $\cJ$ is a proper arithmetic mean closed ideal and $A\in\cB(\cH)$ is a diagonalizable operator with  infinite point spectrum and spectral projections $\SE{P}{n}$, then we have that $A\in \cD_{\cJ,\cI}$ if and only if $s(A)\in\Sigma(\cI)$ and there exists a family $\SE{E}{n}$ of diagonal projections such that the conditions in Eq. \eqref{eq cond caract1} hold.  
\end{rem}

\subsection{Proof of Theorem \ref{main result}}\label{sec prueba main result}

The proof of the forward direction of Theorem \ref{main result} is broken into the following lemmas and propositions.

\begin{lem}\label{proj diag}
 Let $\SE{E}{n}$ be a sequence of diagonal projections such that $\sum_{n\geq 1 } E_n=I+ K$, for some compact operator $K$.
Then there exists an integer $n_0 \geq 1$ satisfying the following  conditions:
\begin{itemize}
\item[i)] $E_n E_m =\delta_{nm}E_n$, for all $m,n > n_0$;
\item[ii)]  $E_n E_m$ is a finite-rank projection, for all $1\leq m, n\leq n_0$, $m\neq n$;
\item[iii)] For each $N \geq n_0$,  $E^{(N)}=\sum_{n > N}E_n$ is a diagonal projection, and $E^{(N)}E_n=0$, for all $n=1, \ldots, N$.
\end{itemize}
\end{lem}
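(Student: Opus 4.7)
The plan is to translate the hypothesis into a combinatorial statement about subsets of $\N$ and then read off each of the three items directly. Since every $E_n\in\cD$, it is determined by a subset $A_n\subset\N$ via $E_ne_k=\chi_{A_n}(k)\,e_k$. Evaluating the identity $\sum_{n\geq 1}E_n=I+K$ on the matrix entries $\langle \,\cdot\, e_k,e_\ell\rangle$ with $k\neq\ell$ shows $K$ must be diagonal (because each $E_n$ and $I$ are), and evaluating on the diagonal entries $\langle \,\cdot\, e_k,e_k\rangle$ gives
\[
1+K_{kk}\ =\ \sum_{n\geq 1}\chi_{A_n}(k)\ =\ \#\{n\geq 1:k\in A_n\}\qquad\text{for every }k\in\N.
\]
Hence each $1+K_{kk}$ is a non-negative integer, and the compactness of the diagonal operator $K$ forces $K_{kk}\to 0$, so $K_{kk}=0$ for all but finitely many $k$.

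Set $F:=\{k\in\N:K_{kk}\neq 0\}$, a finite set. For $k\notin F$ exactly one index $n$ satisfies $k\in A_n$, while for each $k\in F$ the count of such $n$ is the finite integer $1+K_{kk}$. Consequently $\{n\geq 1:A_n\cap F\neq\vacio\}$ is a finite union of finite sets and is itself finite. I would then let $n_0$ be any integer $\geq 1$ strictly larger than every element of this set, so that $A_n\subset F^c$ for every $n>n_0$.

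With $n_0$ chosen this way, I would verify the three items in turn. For (i), if $n,m>n_0$ are distinct then $A_n,A_m\subset F^c$, and the uniqueness of the ``owner'' of each $k\in F^c$ forces $A_n\cap A_m=\vacio$, i.e.\ $E_nE_m=0$. For (ii), any $k\in A_n\cap A_m$ with $n\neq m$ contributes at least $2$ to the count at $k$, hence must lie in $F$; thus $A_n\cap A_m\subset F$ is finite and $E_nE_m$ is a finite-rank diagonal projection. For (iii), item (i) makes $E^{(N)}=\sum_{n>N}E_n$ a SOT-convergent series of mutually orthogonal diagonal projections, which is therefore itself a diagonal projection, and for $n\leq N<\ell$ one has $A_\ell\subset F^c$ and the argument of (ii) gives $A_n\cap A_\ell=\vacio$, whence $E^{(N)}E_n=0$.

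The only slightly subtle point, and the place where I expect the main (if modest) obstacle, is the opening reduction: one must check that WOT convergence of $\sum_{n\geq 1}E_n$ to $I+K$ really does deliver both that $K$ is diagonal and the pointwise counting identity displayed above, and that these force the integrality $1+K_{kk}\in\Z_{\geq 0}$. Once this reduction is in place, items (i)--(iii) are purely combinatorial consequences of the structure of $F$ and no further input from operator ideal theory is needed.
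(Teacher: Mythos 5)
Your proof is correct, and it takes a genuinely different route from the paper's. The paper's argument is operator-theoretic: it picks $n_1$ with $\|(I-Q_{n_1})K(I-Q_{n_1})\|<1$ for $Q_{n_1}=\sum_{i\le n_1}e_i\otimes e_i$, then finds $n_0$ so that $R(E_n)\subseteq N(Q_{n_1})$ for $n>n_0$, and derives $E_nE_m=0$ by a norm/inner-product contradiction ($2\le\langle(E_n+E_m+C)x,x\rangle<2$). Your argument instead identifies each diagonal projection with its index set $A_n\subset\N$, observes that WOT convergence of $\sum_n E_n$ forces $K$ to be diagonal with $1+K_{kk}=\#\{n:k\in A_n\}$ a non-negative integer, and then uses compactness plus integrality to conclude $K_{kk}=0$ off a finite set $F$; everything else is set-theoretic bookkeeping. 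The underlying finiteness fact (each $e_k$ lies in $R(E_n)$ for only finitely many $n$) is present in both proofs, but where the paper buries it in a brief remark and leans on a norm estimate for the main step, you make it the structural centerpiece and never need an analytic inequality. Your approach is more elementary and more transparent; it also delivers (ii) and (iii) with no extra work, whereas the paper simply asserts that the argument for (i) can be adapted. The one issue of care you already flagged — justifying the passage from WOT convergence to the entrywise counting identity — is indeed the only nontrivial reduction, and it is sound: testing the weak limit against $e_k,e_\ell$ gives both that $K$ is diagonal and that the diagonal entries count membership.
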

\begin{proof}
\noi $i)$ The proof consists in a  slight modification of the argument given in the proof of Theorem 
\ref{teo1 L19}.  We include the details for completeness.   Consider the diagonal projections $Q_n:=\sum_{i=1}^n e_i \otimes e_i$, $n \geq 1$, where $e_i\otimes e_i$ denotes the rank-one projection onto $\mathrm{span}\{e_i\}$. Using that $K$ is a compact operator, it follows that there is an integer $n_1 \geq 1$ such that $\|(I-Q_{n_1})K(I-Q_{n_1})\|<1$.  
Next we observe that there exists an integer $n_0 \geq 1$ such that $R(E_n) \subseteq N(Q_{n_1})$, for all $n \geq n_0$. 
In fact, this follows immediately using $\|\sum_{n \geq 1}E_n\| < \infty$.

We now prove that $E_n E_m =\delta_{nm}E_n$, for all $m,n > n_0$. Notice that this is equivalent to $R(E_n)\cap R(E_m) = \{ 0\}$,  for all $m,n > n_0$, $n \neq m$. We proceed by way of contradiction and assume that there is a vector $x \in R(E_n)\cap R(E_m)$, $\| x \|=1$. Consider the operator $C=(I-Q_{n_1})(\sum_{k \neq n,m}E_k)(I-Q_{n_1}) \geq 0$. From the assumption $\sum_{n \geq 1 }E_n=I+ K$, and since $R(E_n) \subseteq N(Q_{n_1})$ and $R(E_m) \subseteq N(Q_{n_1})$, we get that
$$
(E_n + E_m) + C=(I-Q_{n_1}) + (I-Q_{n_1})K(I-Q_{n_1}).
$$
 Thus, we have a contradiction
\begin{align*}
2 & = \PI{ (E_n +E_m)x}{x} \leq \PI{(E_n +E_m + C)x}{x} \\
& =\PI{(I-Q_{n_1} + (I-Q_{n_1})K(I-Q_{n_1})) x}{x}<2,
\end{align*}
and this finishes the proof.

\medskip

\noi $ii)$ The same argument given in the proof of Theorem \ref{teo1 L19} can be repeated in this context.

\medskip

\noi $iii)$ Similar arguments to that of the first item also work in this item. 
 \end{proof}

\begin{lem}\label{finite diff}
Let $\cJ$ be a proper operator ideal and let $\SE{P}{n}$ be a decomposition of the identity. Suppose that there is a family  of diagonal projections  $\SE{E}{n}$ such that 
\begin{equation}\label{cond serie debil}
 \sum_{n \geq 1}(I-E_n) P_n \in \cJ \, \, \, \, \text{  and  } \, \, \, \,  \sum_{n \geq  1}E_n(I- P_n ) \in \cJ, 
\end{equation}
where both series are assumed to converge in the WOT. Then  $P_n - E_n \in \cJ$, for all $n \geq 1$.
\end{lem}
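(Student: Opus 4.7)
The plan is to prove the stronger statement that each term $(I-E_n)P_n$ and $E_n(I-P_n)$ lies in $\cJ$ separately, and then conclude using the algebraic identity
\[
P_n - E_n = (I-E_n)P_n - E_n(I-P_n).
\]

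For the first term, write $K_1 := \sum_{m\geq 1}(I-E_m)P_m \in \cJ$ (WOT limit). Since WOT convergence is preserved under multiplication by bounded operators and $P_m P_n = \delta_{mn}P_n$, the partial sums of $K_1 P_n$ collapse to a single term, giving $K_1 P_n = (I-E_n)P_n \in \cJ$ for every $n$. This step is straightforward.

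The difficulty lies in the dual identity $E_n(I-P_n) \in \cJ$, because the diagonal projections $E_m$ need not be mutually orthogonal. My strategy is to first put ourselves in the setting of Lemma \ref{proj diag}. Using that $\sum_n P_n = I$, one computes
\[
\sum_{n\geq 1}E_n - I = \sum_{n\geq 1}(E_n - P_n) = \sum_{n\geq 1}E_n(I-P_n) - \sum_{n\geq 1}(I-E_n)P_n,
\]
with WOT convergence. By hypothesis the right-hand side is $K_2 - K_1 \in \cJ \subseteq \cK(\cH)$, so $\sum_n E_n = I + K$ for some compact $K$. Lemma \ref{proj diag} then yields an index $n_0$ such that $E_m E_n = 0$ whenever $m\neq n$ and $\max(m,n) > n_0$, while $E_m E_n$ is finite-rank whenever $m\neq n$ and both $m,n \leq n_0$.

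Now multiplying $K_2 := \sum_{m\geq 1}E_m(I-P_m) \in \cJ$ on the left by $E_n$ and passing to WOT limits gives $E_n K_2 = \sum_m E_n E_m(I-P_m)$. By the properties of the $E_m$'s above, the partial sums stabilize: for $n > n_0$ every cross term vanishes outright, and for $n \leq n_0$ only finitely many cross terms survive, each of finite rank times a bounded factor. Hence $E_n K_2 - E_n(I-P_n)$ is a finite-rank operator, so $E_n(I-P_n) = E_n K_2 - (\text{finite rank}) \in \cJ$, since a proper operator ideal contains all finite-rank operators. Combining the two inclusions through the identity at the beginning yields $P_n - E_n \in \cJ$, as required. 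The main obstacle to be careful with is the handling of WOT limits under one-sided multiplication and correctly accounting for the finite-rank residues coming from possible overlaps of the $E_m$ at small indices; Lemma \ref{proj diag} is precisely the tool that controls this.
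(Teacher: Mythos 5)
Your proof is correct, and it takes a genuinely more direct route than the paper's. The paper first truncates both series to finite sums by right-multiplying with $P^{(N)}=\sum_{n>N}P_n$ and $E^{(N)}=\sum_{n>N}E_n$ (item iii of Lemma \ref{proj diag}), then introduces the auxiliary mutually orthogonal diagonal projections $E_1'=E_1$, $E_n'=E_n-E_n(E_1'+\cdots+E_{n-1}')$, establishes $P_n-E_n'\in\cJ$ by multiplying by $P_n$ and $E_n'$, and finally transfers back to $E_n$ via the finite-rank differences $E_n'-E_n$. You instead multiply the untruncated series $K_1$ and $K_2$ directly by $P_n$ and $E_n$ and absorb the cross-terms: $(I-E_n)P_n=K_1P_n$ since the $P_m$ are exactly orthogonal, and $E_nK_2$ differs from $E_n(I-P_n)$ by a finite sum of finite-rank terms because the overlaps $E_nE_m$ ($m\neq n$) are, by Lemma \ref{proj diag}, zero when $\max(m,n)>n_0$ and finite-rank when both indices are $\leq n_0$. (Note that the vanishing for $n\leq n_0<m$ is not literally item (i) but is a quick consequence of item (iii) with $N=n_0$, since $E_mE_n$ is a commuting product of diagonal projections dominated by $E^{(n_0)}E_n=0$.) Both approaches hinge on Lemma \ref{proj diag}; yours avoids the $E_n'$ altogether and is shorter for this lemma, while the paper's construction of the $E_n'$ is reused in Remark \ref{several rems} and in the proof of the main theorem, which is why the paper introduces them here.
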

\begin{proof}
Set $K=\sum_{n\geq 1}  (I-E_n) P_n \in \cJ$ and $L=\sum_{n\geq 1}E_n(I- P_n ) \in \cJ$. Since $\sum_{n\geq 1} P_n=I$, the series $S:=\sum_{n= 1}^\infty E_nP_n$ converges weakly, and $S=I-K$.
Therefore,
$$
\sum_{n\geq 1} E_n=L + S=I + L- K,
$$
where $L-K\in\cJ$ is a compact operator. Applying Lemma \ref{proj diag}, we find an integer $n_0 \geq 1$ such that $E_n E_m =\delta_{nm}E_n$, for all $m,n > n_0$, and $E^{(N)}=\sum_{n > N}E_n$ is a diagonal projection such that $E^{(N)}E_n=0$, for all $n=1, \ldots, N$, $N\geq n_0$. We can multiply the series in Eq. (\ref{cond serie debil}) by the projections   $P^{(N)}=\sum_{n > N} P_n$ and $E^{(N)}$  to obtain 
\begin{equation}\label{cond serie debil2}
 \sum_{n =1}^{N}  (I-E_n) P_n \in \cJ \, \, \, \, \text{  and  } \, \, \, \,  \sum_{n = 1}^{N} E_n(I- P_n ) \in \cJ. 
\end{equation}
We recall the argument from \cite{L19} to construct mutually orthogonal diagonal projections: let $E_1'=E_1$ and inductively define $E_n'=E_n - E_n(E_1' + \ldots + E_{n-1}')$, for all $n=2, \ldots , N$. 
Then, $E_1', \ldots , E_N'$ are  mutually orthogonal  diagonal projections. Again by Lemma \ref{proj diag} (items $i)$ and $ii)$) $E_n E_m$ is a finite-rank projection, for   $n \neq m$; hence, we see that $E_n'=E_n + R_n$, where $R_n$ is a finite-rank operator, $n=1, \ldots, N$. From this fact and Eq. (\ref{cond serie debil2}), it follows that 
\begin{equation*}
 \sum_{n =1}^{N}  (I-E_n') P_n \in \cJ \, \, \, \, \text{  and  } \, \, \, \,  \sum_{n = 1}^{N} E_n'(I- P_n ) \in \cJ. 
\end{equation*}  
Now we can multiply by $P_n$ and $E_n'$ to get that $E_n'(I-P_n) \in \cJ$ and $(I-E_n')P_n \in \cJ$ for $n=1, \ldots, N$. Hence, 
$$
P_n - E_n'=P_n(I-E_n') - (I-P_n)E_n' \in \cJ.
$$
Recall that $E_n'$ and $E_n$ differ by a finite-rank operator. Thus, we obtain $P_n - E_n \in \cJ$, for $n=1, \ldots, N$, where $N\geq n_0$ is arbitrary. The proof is completed. 
\end{proof}

\begin{rem}\label{several rems}
Consider the notation in Lemma \ref{finite diff}.
Then, the diagonal projections $\{ E_n \}_{n \geq 1}$ satisfying the conditions in Eq. (\ref{cond serie debil}) can be replaced by other diagonal projections $\{E'_n\}_{n\geq 1}$ which also satisfy the conditions in Eq. (\ref{cond serie debil}) and are mutually orthogonal. To this end, take  $E_1', \ldots, E_N'$  the mutually orthogonal diagonal projections defined in the proof of Lemma \ref{finite diff},  where $N$ is fixed, $N\geq n_0$. According to the property stated in Lemma \ref{proj diag} $iii)$ we can set $E_n'=E_n$ for $n>N$. Taking into account that $E_n$ and $E_n'$ differ by a finite-rank operator for $n=1, \ldots, N$, we get that $\{ E_n'\}_{n \geq 1}$ is a sequence of mutually orthogonal projections such that
\begin{equation*}
 \sum_{n \geq  1}(I-E_n') P_n \in \cJ \, \, \, \, \text{  and  } \, \, \, \,  \sum_{n \geq 1}E_n'(I- P_n ) \in \cJ, 
\end{equation*}
where both series converge weakly. 
\end{rem}

Our next result is a generalization of \cite[Thm. 2.3]{EC10} to an arbitrary proper operator ideal. This result was proved in the aforementioned work for separable symmetrically-normed ideals, 
 based on work by Carey \cite{C85} and by Serban and Turcu \cite{ST07}. Our approach is an adaption of the arguments of \cite{EC10}; we will need the following result which is \cite[Thm. 2.2]{ST07}.

\begin{lem}\label{lem ST}
Let $\cH$ and $\cK$ be separable, infinite dimensional Hilbert spaces. Let $\cH_1$ and $\cH_2$ be infinite dimensional subspaces of $\cH$ and let $P_i$ denote the orthogonal projection onto $\cH_i$, $i=1,2$. The following statements are equivalent:
\ben
\item There exist isometries $V_1,\,V_2\in \cB(\cK,\cH)$ with ranges $\cH_1$ and $\cH_2$ such that $V_1-V_2$ is compact.
\item $P_1-P_2$ is compact and $[P_1:P_2]=0$. \qed
\een 
\end{lem}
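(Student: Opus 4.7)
The plan is to prove the equivalence in two separate directions, with the forward implication being a direct computation and the reverse being an essentially immediate consequence of Proposition \ref{cond proj res diag} applied to $\cJ = \cK(\cH)$.

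For the implication $(1)\Rightarrow (2)$, suppose $V_1, V_2 \in \cB(\cK,\cH)$ are isometries with $V_iV_i^* = P_i$ and $V_1 - V_2 \in \cK(\cH)$. To show $P_1 - P_2$ is compact, I would use the telescoping identity
\[
P_1 - P_2 = V_1V_1^* - V_2V_2^* = V_1(V_1^* - V_2^*) + (V_1 - V_2)V_2^*,
\]
in which both summands contain a compact factor. To verify $[P_1:P_2] = 0$, I would read off from the definition with $W = V_1$ and $V = V_2$ (so that $WW^* = P_1$, $VV^* = P_2$, $V^*V = W^*W = I_\cK$) that $[P_1:P_2] = \mathrm{Ind}(V_2^* V_1)$. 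Writing $V_2^* V_1 = V_2^* V_2 + V_2^*(V_1 - V_2) = I_\cK + K$ for a compact $K \in \cB(\cK)$ gives a Fredholm operator of index zero, by the standard invariance of the Fredholm index under compact perturbations of the identity.

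For the implication $(2)\Rightarrow (1)$, since $P_1 - P_2 \in \cK(\cH)$ and $[P_1:P_2] = 0$, Proposition \ref{cond proj res diag} applied with the proper operator ideal $\cJ = \cK(\cH)$ produces a unitary $U \in \cU_{\cK(\cH)}(\cH)$ with $UP_1U^* = P_2$. Fix any isometry $V_1 \in \cB(\cK,\cH)$ with range $\cH_1$, which exists since $\cK$ and $\cH_1$ are both separable and infinite dimensional, and define $V_2 := UV_1$. Then $V_2$ is an isometry with $V_2V_2^* = UP_1U^* = P_2$, so its range is $\cH_2$, and
\[
V_1 - V_2 = (I - U)V_1 \in \cK(\cK,\cH),
\]
since $I - U$ is compact.

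The hard part is negligible here: $(2)\Rightarrow(1)$ reduces entirely to invoking Proposition \ref{cond proj res diag}, and $(1)\Rightarrow(2)$ is a short two-step calculation. The only subtlety is to match the orientation convention in the definition of essential codimension correctly (i.e.\ which isometry plays the role of $W$ and which of $V$), so that the Fredholm index computation indeed yields $[P_1:P_2]$ and not $[P_2:P_1]$; once this bookkeeping is set up, the rest is automatic.
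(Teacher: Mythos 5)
Your proof is correct. Note that the paper itself does not supply a proof of this lemma: it cites Serban and Turcu, \cite[Thm.\ 2.2]{ST07}, and states the lemma with a terminating \qed to indicate the proof is omitted. Your derivation is nonetheless valid and self-contained given the other tools stated in the paper: the $(2)\Rightarrow(1)$ direction is an immediate application of Proposition \ref{cond proj res diag} with $\cJ=\cK(\cH)$ (which for that ideal is precisely the Brown--Douglas--Fillmore theorem), and the $(1)\Rightarrow(2)$ direction is the expected telescoping argument combined with the standard stability of the Fredholm index under compact perturbation. You have also handled the one genuinely delicate point, the orientation convention $W\leftrightarrow P_1$, $V\leftrightarrow P_2$ in $[P_1:P_2]=\mathrm{Ind}(V_2^*V_1)$, and you correctly use the infinite-dimensionality of $\cH_1,\cH_2,\cK$ (all separable) to guarantee that the initial isometry $V_1\in\cB(\cK,\cH)$ with range $\cH_1$ exists.
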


\begin{pro}\label{partial iso charac}
Let $V_1, V_2$ be partial isometries and let $\cJ$ be a proper operator ideal.  Then 
there exists $U \in \cU_\cJ(\cH)$ such that $UV_1=V_2$ if and only if $V_1-V_2 \in \cJ$ and 
$N(V_1)=N(V_2)$.
\end{pro}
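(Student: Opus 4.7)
My plan is to handle the two directions separately, with the backward direction being the substantive part. The forward direction is immediate: if $U\in\cU_\cJ(\cH)$ with $UV_1=V_2$, then $V_2-V_1=(U-I)V_1\in\cJ$ since $\cJ$ is a two-sided ideal, and injectivity of $U$ gives $N(V_2)=N(UV_1)=N(V_1)$.

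For the converse, I would set $\cN=N(V_1)=N(V_2)$ (so $V_1,V_2$ share the common initial space $\cN^\perp$), let $\cK_i=R(V_i)$, and let $P_i=V_iV_i^*$ be the projection onto $\cK_i$. The hypothesis $V_1-V_2\in\cJ$ immediately yields
\[
P_1-P_2=(V_1-V_2)V_1^*+V_2(V_1-V_2)^*\in\cJ.
\]
The heart of the argument is to verify that $[P_1:P_2]=0$ so that Proposition \ref{cond proj res diag} applies. The key observation is that since $V_1,V_2$ share initial space, the operator $V_2V_1^*|_{\cK_1}\colon\cK_1\to\cK_2$ is a unitary isomorphism: it is an isometry on $\cK_1=V_1(\cN^\perp)$, and it is surjective because $V_2y=V_2V_1^*(V_1y)$ for every $y\in\cN^\perp$. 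A short calculation then gives
\[
P_2P_1-V_2V_1^*=V_2V_2^*(V_1-V_2)V_1^*=P_2(V_1-V_2)V_1^*\in\cJ\subseteq\cK(\cH),
\]
so $P_2P_1|_{\cK_1}$ is a compact perturbation of a unitary isomorphism, hence Fredholm with index zero; this is precisely $[P_1:P_2]=0$.

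Proposition \ref{cond proj res diag} then furnishes a unitary $W\in\cU_\cJ(\cH)$ with $WP_1W^*=P_2$, and I would combine $W$ with the explicit isomorphism $V_2V_1^*|_{\cK_1}$ by setting
\[
U=V_2V_1^*+W(I-P_1).
\]
A direct check shows $U$ is unitary: its first summand implements a unitary isomorphism $\cK_1\to\cK_2$, and since $W(I-P_1)W^*=I-P_2$ the second summand implements a unitary isomorphism $\cK_1^\perp\to\cK_2^\perp$, and the two pieces have orthogonal ranges and orthogonal cokernels. Moreover,
\[
U-I=(V_2-V_1)V_1^*+(W-I)(I-P_1)\in\cJ,
\]
so $U\in\cU_\cJ(\cH)$; and using $V_1^*V_1=P_{\cN^\perp}$ and $(I-P_1)V_1=0$ one finds $UV_1=V_2P_{\cN^\perp}+0=V_2$, as required.

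I expect the main obstacle to be the verification that $[P_1:P_2]=0$ from the abstract hypotheses, since the index of a Fredholm pair need not vanish even when the difference is compact; identifying $V_2V_1^*|_{\cK_1}$ as a unitary isomorphism and comparing it with $P_2P_1|_{\cK_1}$ modulo $\cJ$ is the decisive analytic step that promotes the projection-level Proposition \ref{cond proj res diag} to the partial-isometry statement. The subsequent construction of $U$ is a standard cross-section type formula, and everything else is a routine verification.
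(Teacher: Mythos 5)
Your proof is correct, and the core of the backward direction takes a genuinely different (and tighter) route than the paper's. The paper splits into two cases according to whether $\dim R(V_1)$ is infinite or finite: in the infinite case it invokes the Serban--Turcu characterization (cited as Lemma 3.5, from \cite[Thm.\ 2.2]{ST07}) to obtain $P_1-P_2\in\cK(\cH)$ and $[P_1:P_2]=0$, and in the finite case it enlarges $V_1,V_2$ to full isometries $\tilde V_1,\tilde V_2$ with $\tilde V_1-\tilde V_2$ of finite rank and reduces to the infinite case. You instead establish $[P_1:P_2]=0$ directly, by observing that $V_2V_1^*|_{\cK_1}\colon\cK_1\to\cK_2$ is a unitary isomorphism (using the common initial space) and that $P_2P_1|_{\cK_1}=V_2V_1^*|_{\cK_1}+P_2(V_1-V_2)V_1^*|_{\cK_1}$ differs from it by a compact operator, so it is Fredholm of index zero; this handles the finite- and infinite-rank cases uniformly and does not rely on the external Serban--Turcu lemma. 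From that point on the two arguments converge: both apply Proposition \ref{cond proj res diag} to obtain a $\cU_\cJ(\cH)$-unitary conjugating $P_1$ to $P_2$, and both assemble $U$ as the orthogonal sum $V_2V_1^*\oplus W|_{\cK_1^\perp}$ (the paper writes it as $U=U_1\oplus U_2|_{R(V_1)^\perp}$, which is the same operator). The trade-off is that your self-contained index computation replaces an appeal to a quoted result, at the cost of re-deriving in passing the relevant half of that result; the paper's route is slightly shorter on the page but requires the case split to satisfy the infinite-dimensionality hypothesis of the cited lemma.
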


\begin{proof}
If there exists $U \in \cU_\cJ(\cH)$ such that $UV_1=V_2$, then it is clear that $N(V_1)=N(V_2)$, and  
$V_1-V_2=V_1-UV_1= (I-U)V_1\in\cJ$.

Conversely, assume that $V_1, V_2$ are partial isometries such that $V_1-V_2 \in \cJ$ and 
$N(V_1)=N(V_2)$. We consider the following two cases. Assume first that $	\dim R(V_1)=\infty$. 
Since $R(V_1^*)=N(V_1)^\perp=N(V_2)^\perp$, then $U_1=V_2V_1^*:R(V_1)\rightarrow R(V_2)$ is a surjective isometry.
Hence, $\dim R(V_2)=\infty$ and $V_1|_{N(V_1)^\perp},\,V_2|_{N(V_1)^\perp}\in \cB(N(V_1)^\perp,\cH)$ are isometries such that $V_1|_{N(V_1)^\perp}-V_2|_{N(V_1)^\perp}$ is compact.
Then, by Lemma \ref{lem ST} we see that $P_1-P_2$ is compact and also $[P_1:P_2]=0$, where $P_i=V_iV_i^*$  is the orthogonal projection onto $R(V_i)=R(V_i|_{N(V_1)^\perp})$, for $i=1,2$. Moreover, since $V_1-V_2\in\cJ$, then 
$P_1-P_2=V_1V_1^*-V_2V_2^*= V_1(V_1^*-V_2^*)+(V_1-V_2)V_2^*\in\cJ$. 
 By Proposition \ref{cond proj res diag} we conclude that there exists $U_2\in\cU_\cJ(\cH)$ such that $U_2 P_1 U_2^*=P_2$, so then $U_2 (I-P_1) U_2^*=I-P_2$. In this case, the restriction $U_2|_{R(V_1)^\perp}:R(V_1)^\perp\rightarrow R(V_2)^\perp$ is a surjective isometry. We now set $U=U_1\oplus U_2|_{R(V_1)^\perp}\in\cU(\cH)$. Then, by construction $UV_1=V_2$. On the other hand, $(U-I)P_1=(V_2-V_1)V_1^*\in\cJ$ and $(U-I)(I-P_1)=(U_2-I) (I-P_1)\in\cJ$ which show that $U-I\in\cJ$ so $U\in\cU_\cJ(\cH)$.

Assume now that $\dim R(V_1)=k<\infty$. As before, we can define the surjective isometry $U_1=V_2V_1^*:R(V_1)\rightarrow R(V_2)$ so that $\dim R(V_2)=k<\infty$. Hence, $\dim (R(V_1)^\perp\cap R(V_2)^\perp )=\infty$ and we can choose an orthonormal basis $\{f_n\}_{n\geq 1}$ of $R(V_1)^\perp\cap R(V_2)^\perp$. Let 
$\{g_n\}_{n\geq 1}$ be an orthonormal basis of $\cH$ such that the first $k$ vectors form an  orthonormal basis of $N (V_1)^\perp=N (V_2)^\perp$. Set
$$\tilde V_1 \,g_n= \begin{cases} V_1\,g_n  \, ,&  1\leq n\leq k\,, \\ f_{n-k} \,,&\ \  n>k \,,\end{cases}
\peso {and }
\tilde V_2 \,g_n= \begin{cases} V_2\,g_n\,, &  1\leq n\leq k \,,\\ f_{n-k}\,, &\ \ n>k \,.\end{cases}$$
By construction, $\tilde V_1,\,\tilde V_2$ are isometries such that $\dim R(\tilde V_1)=\infty$ and $\tilde V_1-\tilde V_2\in\cJ$, since it is a finite rank operator. Then, by the previous case, we conclude that there exists $U\in\cU_\cJ(\cH)$ such that $U\tilde V_1=\tilde V_2$; this last fact clearly implies that 
$U V_1=V_2$.
\end{proof}

We observe that Proposition \ref{partial iso charac} may be considered as a version of Proposition \ref{cond proj res diag} for partial isometries instead of projections. Here the action of the group $\cU_\cJ(\cH)$ on the set of partial isometries moves the final space of the partial isometries (see \cite[Thm. 2.4]{EC10} for   an analogous result about the action moving  both the initial and final space). As we shall see, the characterization in Proposition \ref{partial iso charac} is key to our next result.

\begin{pro}\label{partial isom}
Under the same assumptions of Lemma \ref{finite diff},  then there exist an integer $n_1 \geq 1$ and a partial isometry $V\in \cB(\cH)$ with initial projection $P:=\sum_{n > n_1} P_n$ and final projection $E:=\sum_{n > n_1} E_n$ such that $$VP_n V^*=E_n \peso{for all} n > n_1\ ,$$  $V-P\in \cJ$, $E-P\in \cJ$  and $[E : P]=0$.
\end{pro}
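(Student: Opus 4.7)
The plan is to construct $V$ as the partial isometry in the polar decomposition of $Q := \sum_{n > n_1} E_n P_n$, for a carefully chosen $n_1$. By Remark \ref{several rems} I may assume the $\{E_n\}_{n\geq 1}$ are pairwise orthogonal, and Lemma \ref{finite diff} then gives $P_n - E_n \in \cJ$ for every $n$. With $K := \sum_n (I-E_n)P_n \in \cJ$ and $L := \sum_n E_n(I-P_n) \in \cJ$, the partial sums of $\sum_n E_n$ form an increasing sequence of projections whose SOT-limit differs from $I$ by the compact projection $K - L$, which is therefore of finite rank; write $\sum_n E_n = I - R$.

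The crucial preparatory step is a pinching argument. Using mutual orthogonality of $\{P_n\}$, one has $\cC_{\{P_n\}}(K) = \sum_n P_n(I - E_n) P_n$. Since $K \in \cK(\cH)$, this pinching is compact and converges in operator norm by Remark \ref{pinching properties} i); its summands are positive with pairwise orthogonal supports $R(P_n)$, so the direct-sum structure forces $\|P_n(I - E_n) P_n\| \to 0$. A symmetric argument gives $\|E_n(I - P_n) E_n\| \to 0$. I fix $n_1$ large enough that both norms are strictly less than $1/2$ for every $n > n_1$; in particular $R(P_n) \cap N(E_n) = \{0\}$ and $R(E_n) \cap N(P_n) = \{0\}$ for all such $n$.

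With this choice, $Q = \sum_{n > n_1} E_n P_n$ is a WOT-convergent bounded operator satisfying $Q^*Q = \sum_{n > n_1} P_n E_n P_n$ and $QQ^* = \sum_{n > n_1} E_n P_n E_n$, while $P^{(n_1)} - Q^*Q = \sum_{n > n_1} P_n(I - E_n) P_n$ and $E^{(n_1)} - QQ^* = \sum_{n > n_1} E_n(I - P_n) E_n$ both have norm $< 1/2$. Hence $Q^*Q$ is boundedly invertible on $R(P^{(n_1)})$ and $QQ^*$ on $R(E^{(n_1)})$, and I set $V := Q(Q^*Q)^{-1/2}$. A direct check gives $V^*V = P^{(n_1)}$ and $VV^* = E^{(n_1)}$. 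Moreover, since $Q^*Q$ commutes with every $P_n$ (its defining series is block-diagonal with respect to $\{P_n\}$), so does $(Q^*Q)^{-1/2}$; thus $V|_{R(P_n)}$ coincides with the partial-isometry factor in the polar decomposition of the bijection $E_n P_n|_{R(P_n)} : R(P_n) \to R(E_n)$, yielding $V P_n V^* = E_n$ for every $n > n_1$.

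The remaining conditions are verified quickly. One has $E - P = -R + \sum_{n \leq n_1}(P_n - E_n) \in \cJ$. For $V - P \in \cJ$ I decompose
\[
V - P^{(n_1)} = (Q - P^{(n_1)})(Q^*Q)^{-1/2} + \bigl((Q^*Q)^{-1/2} - P^{(n_1)}\bigr).
\]
The first summand lies in $\cJ$ since $Q - P^{(n_1)} = -\sum_{n > n_1}(I - E_n) P_n$ is a tail of $K \in \cJ$; the second equals $h(A)$ with $A := P^{(n_1)} - Q^*Q \in \cJ$ and $h(t) := (1-t)^{-1/2} - 1 = t\, g(t)$, where $g$ is analytic and bounded on a neighborhood of $\sigma(A) \subseteq [0,1/2]$, so $h(A) = A\, g(A) \in \cJ$ by the ideal property. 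Finally, $[E:P] = 0$ follows because $V$ (equivalently $Q|_{R(P^{(n_1)})}$) is a Hilbert-space bijection $R(P^{(n_1)}) \to R(E^{(n_1)})$, so its Fredholm index vanishes. The main obstacle I anticipate is the transition from compactness of $\cC_{\{P_n\}}(K)$ to pointwise norm decay of its positive summands; this relies essentially on the pairwise orthogonal supports of the $P_n$ and is what forces $[P_n : E_n] = 0$ for large $n$, enabling the piecewise identity $V P_n V^* = E_n$.
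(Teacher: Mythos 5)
Your construction mirrors the paper's: you take $V$ to be the polar-decomposition partial isometry of the ``diagonal part'' $Q=\sum_{n>n_1}E_nP_n$ (the paper's $S$), you obtain the invertibility estimates from the norm-convergence of pinchings of compact operators, and you derive $(Q^*Q)^{-1/2}-P\in\cJ$ by the functional-calculus identity $h(A)=A\,g(A)$ rather than the paper's $(|S|+P)^\dagger K'$ device; both work, and yours is arguably cleaner. Assuming mutual orthogonality of the $E_n$ up front via Remark \ref{several rems} is also a fine simplification.

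The one genuine gap is the final claim $[E:P]=0$. By definition, $[P:E]$ is the Fredholm index of $EP|_{R(P)}:R(P)\to R(E)$, not the index of $V$ or of $Q|_{R(P)}$. That $V|_{R(P)}$ is a bijection $R(P)\to R(E)$ says only that $V$ has index zero; it does not by itself compute the essential codimension, which concerns a different operator. The fix is short and uses what you have already proved: since $R(V)\subseteq R(E)$ we have $V=EV$, so $EP-V=E(P-V)\in\cJ\subseteq\cK(\cH)$; thus $EP|_{R(P)}$ and $V|_{R(P)}$ differ by a compact operator and have the same Fredholm index, which is zero, giving $[P:E]=0$ (equivalently $[E:P]=0$). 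The paper instead reaches the conclusion structurally, applying Proposition \ref{partial iso charac} to find $U\in\cU_\cJ(\cH)$ with $UV=P$, whence $UEU^*=P$ and Proposition \ref{cond proj res diag} yields $[P:E]=0$; either route closes the gap, but as written your last step conflates two distinct operators.
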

\begin{proof}
We consider the operators $K ,L \in \cJ$  and the integer $n_0 \geq 1$ defined in the proof of Lemma \ref{finite diff}, that is, $n_0$ satisfies the conditions $i),\,ii)$ and $iii)$ from Lemma \ref{proj diag},
$$K=\sum_{n\geq 1}  (I-E_n) P_n \in \cJ \py L=\sum_{n\geq 1}E_n(I- P_n ) \in \cJ\,.$$
We claim that there is an integer $n_1 \geq n_0$ satisfying the following conditions:
\begin{enumerate}
\item[1.] $\|\sum_{n > n_1} P_n (I-E_n)P_n \|<1$; 
\item[2.] $\|\sum_{n > n_1} E_n (I-P_n)E_n \|<1$.
\end{enumerate}
To see this, note that the first item follows by using that the series $\sum_{n =1}^\infty P_n (I-E_n)P_n=\sum_{n = 1}^\infty P_n KP_n$ is convergent in the operator norm (see Remark \ref{pinching properties}). For the second item, note that $\{ E_n\}_{n >n_0}$ is a sequence of mutually orthogonal projections and $\sum_{n = 1}^{n_0} E_n(I-P_n) \in \cJ$ by Eq. (\ref{cond serie debil2}) with $N=n_0$. Therefore, $L'=\sum_{n >n_0} E_n(I-P_n) \in \cJ$. Thus we can apply the same argument of the first item because the series $\sum_{n > n_0} E_n (I-P_n)E_n=\sum_{n >n_0}E_n L'E_n$  is also norm convergent. This proves our claim.

Now we set  
$$
 P =\sum_{n > n_1} P_n \, ; \, \, \, \, E=\sum_{n > n_1} E_n \, ; \, \, \, \, S=\sum_{n > n_1} E_nP_n\, ;
$$
where all the series  converge strongly. Indeed, in order to see that the third series converges strongly notice that if $m>n_1$ then
$$
\left\| \sum_{n\geq m} E_nP_nx \right\|^2=\sum_{n\geq m}\|E_nP_nx\|^2\leq \sum_{n\geq m}\|P_nx\|^2\xrightarrow[m\rightarrow\infty]{}0\,,
$$where we have used that $\{E_n\}_{n>n_1}$ and $\{P_n\}_{n>n_1}$ are families of mutually orthogonal projections.  Observe that, by construction, $ESP=S$. 
On the other hand, using that convergence in the SOT implies convergence in the WOT and that the involution is WOT-continuous we see that $S^*=\sum_{n>n_1}P_nE_n$, where the convergence is in the WOT. Arguing as before, we see that the last series actually converges in the SOT. Now notice that the sequence of partial sums 
$S_k:=\sum_{n = n_1 + 1}^k E_nP_n$, $k \geq n_1 + 1$, is norm bounded. Indeed, for $x \in \cH$, we have
$$\|S_k x\|^2=\sum_{n = n_1 + 1}^k \|E_n P_n x\|^2 \leq \sum_{n = n_1 + 1}^k \|P_n x \|^2 =\|x\|^2\,.$$ Thus, $\|S_k\| \leq 1$, $k \geq n_1 + 1$.   
Since multiplication is jointly SOT continuous on bounded sets of $\cB(\cH)$ we find that  $S^*S=\sum_{n > n_1}P_nE_nP_n$ and  $SS^*=\sum_{n > n_1}E_n P_n E_n$ converge in the SOT. 

 Notice that by the first inequality in item 1 above,
$$
\left\| |S|^2 - P \right\|= \left\|\sum_{n > n_1} P_n(I-E_n)P_n \right\|<1,
$$
which gives that $|S||_{R(P)}:R(P)\to R(P)$ is an invertible operator. A similar computation with $|S^*|^2$ and $E$ in place of $|S|^2$ and $P$ (based on item 2. above)
allows us to conclude that $|S^*||_{R(E)}:R(E)\to R(E)$ is invertible. Therefore $S|_{R(P)}:R(P) \to R(E)$ is also invertible. 
Notice that $V:=S|S|^\dagger$ is a partial isometry with initial projection $P$ and final projection $E$,  where 
 $|S|^\dagger$ denotes the Moore-Penrose inverse of $|S|$. Now note that
$$
S-P=-\sum_{n> n_1}(I-E_n)P_n=-KP \in \cJ\,.
$$ 
Since $S-P\in\cJ$, then a direct computation shows that $|S|^2-P=S^*S-P^*P:=K'\in\cJ$.
Then,  we get $|S|- P=(|S|+P)^\dagger K' \in \cJ$, and consequently, $|S|^\dagger-P \in \cJ$.  This leads us to conclude $V-P=(S-P)|S|^\dagger + |S|^\dagger - P \in \cJ$.

We now show that $VP_nV^*=E_n$ for all $n > n_1$. Hence, we fix $n>n_1$ and note that $SP_n=E_nP_n=E_nS$, which gives $P_n|S|^2=|S|^2P_n$. This implies
that $P_n|S|=|S|P_n$ and $P_n|S|^\dagger=|S|^\dagger P_n$. Hence 
$$
VP_nV^*=S|S|^\dagger P_n |S|^\dagger S^*=SP_n (|S|^\dagger)^2S^*=E_n S (|S|^\dagger)^2S^*=E_nE=E_n\,.
$$
Finally, we note that $V$ and $P$ are two partial isometries such that
$N(V)=N(P)$ and $V-P\in \cJ$. Therefore we can apply Proposition \ref{partial iso charac} to conclude that there exists $U \in \cU_\cJ(\cH)$ such that $UV=P$. Thus, $UEU^*=UVV^*U^*=P$, which shows that $P-E \in \cJ$ and $[P:E]=0$  by Proposition \ref{cond proj res diag}.
\end{proof}

Now we can prove our main result.

\begin{proof}[Proof of Theorem \ref{main result}] 
We begin by showing the forward direction. Notice that we can apply Proposition \ref{partial isom} and obtain $n_1 \geq 1$, a partial isometry $V\in \cB(\cH)$ with initial projection
$P:=\sum_{n > n_1} P_n$ and final projection $E:=\sum_{n > n_1} E_n$ such that $VP_n V^*=E_n$, for all $n > n_1$, and such that $V-P\in \cJ$,   $P-E\in\cJ$ and $[P:E]=0$. According to Lemma \ref{finite diff}, we have $P_n - E_n \in \cJ$ for $n=1,\ldots, n_1$. Furthermore, as we have observed in Remark \ref{several rems}, by replacing the initial projections $E_1,\ldots,E_{n_1}$ (since $n_0\leq n_1$) we can assume that the diagonal projections $\{ E_n\}_{n \geq 1}$ are mutually orthogonal.
We further replace
$E_{n_1}$ by $$E_{n_1}:=I-\sum_{n\neq n_1}E_n=I-\left(\sum_{n=1}^{n_1-1}E_n+E\right)\,.$$ 
In this case, using that $\{P_n\}_{n\geq 1}$ is a decomposition of the identity,
$$
P_{n_1}-E_{n_1}=I-\left(\sum_{n=1}^{n_1-1}P_n+P\right) - \left( I-\left(\sum_{n=1}^{n_1-1}E_n+E\right)\right)=\sum_{n=1}^{n_1-1}E_n-P_n+E-P\in\cJ\,.
 $$ In this way, we can further assume that $\{E_n\}_{n\geq 1}$ is a decomposition of the identity in $\cD$.
Now, we set $E_0=E$, $P_0=P$,  and consider the finite families of projections
$$
\cE=\{E_0,\,E_1,\ldots,E_{n_1}\} \py \cP=\{P_0,\,P_1,\ldots,\,P_{n_1}\}\,.
$$
Notice that $\cP$ and $\cE$ are (finite) decompositions of the identity, and then by Remark \ref{prop ess cod}, it follows that
$$
\sum_{n=0}^{n_1}[P_n:E_n]=[I:I]=0\,.
$$
 Hence,
by applying \cite[Lemma 3.3]{L19} to $\cP$ and $\cE$ we obtain a family
$\cE'=\{E'_n\}_{n=0}^{n_1}$ such that $E'_n-P_n\in\cJ$ and 
$[P_n:E'_n]=0$, for $0\leq n\leq n_1$. By inspection of the proof of that result,
we further see that $E'_0=E$, since $[P_0:E_0]=[P:E]=0$.
 By \cite[Lemma 3.2]{L19}, we get that there exists
$U_0\in\cU_\cJ(\cH)$ such that $U_0P_nU_0^*=E'_n$, for $0\leq n\leq n_1$. 
Consider the partial isometry $V_\perp=  U_0 (I-P_0)$. Since $U_0\in \cU_\cJ(\cH)$, we
see that $V_\perp-(I-P_0)\in\cJ$, and by 
construction, $V_\perp P_n\,V_\perp^*=E'_n$ for $1\leq n\leq n_1$.

We set $U=V+V_\perp\in\cU(\cH)$. Notice that
$$
U-I=(V-P_0)+(V_\perp-(I-P_0))\in\cJ,
$$ 
$UP_n\,U^*=E'_n\in\cD$ for $1\leq n\leq n_1$, and $UP_n\,U^*=E_n\in\cD$, for $ n > n_1$. Hence, 
$$
UAU^*=\sum_{n\geq 1} \la_n\ UP_n\,U^*\in\cD,
$$
that is, $A$ is $\cU_\cJ(\cH)$-diagonalizable.

\medskip

We now show the reverse implication. Thus, we assume further that $\cJ$ is an arithmetic mean closed operator ideal and that there exists $U\in\cU_\cJ(\cH)$ such that 
$B=UAU^*\in\cD$. Therefore, $E_n:=UP_nU^*\in\cD$, for $n \geq 1$, since these are spectral projections of $B\in\cD$. Notice that in this case we get that $\{E_n\}_{n \geq 1}$ is  a decomposition of the identity. Moreover, we have that
\begin{align}
\sum_{n\geq 1} (I-E_n) P_n  & = \sum_{n\geq 1} (I-E_n) (U^* -I)  E_n U    
\nonumber
\\
& = (U^*-I)  \sum_{n\geq 1} E_n U -  \left(\sum_{n\geq 1} E_n \,(U^*-I)\,E_n \right)\, U  \nonumber \\
& = (U^*-I) U -  \left(\sum_{n\geq 1} E_n (U^*-I)E_n \right)\,U\in \cJ \label{series varias},
\end{align}
where we use that $U^*-I \in \cJ$ and that, in the second term, pinching operators  preserve arithmetic mean closed ideals (see Remark \ref{pinching properties}). Notice that the convergence of the first term is indeed in the operator norm. This is a consequence of the well-known fact that multiplication by  a compact operator of a strong convergent sequence turns  the resulting sequence into a norm convergent one. Also note that the second term   corresponding to the pinching of $U^*-I\in\cJ$ actually converges in the operator norm again by Remark \ref{pinching properties}. Hence, we conclude that the first series in Eq. (\ref{eq cond caract1}) is norm convergent to an operator belonging to $\cJ$. The second series in Eq. (\ref{eq cond caract1}) can be treated analogously. 
\end{proof}

\subsection{Remarks and consequences}

Some remarks on  Theorem \ref{main result} are in order.

\begin{rem}\label{additional rem}
$i)$ For a diagonalizable operator $A$ with infinite point spectrum and spectral projections $\SE{P}{n}$, one might ask whether the existence  of diagonal projections $\SE{E}{n}$ satisfying $P_n - E_n \in \cJ$ for all $n\geq 1$, implies that $A$ is $\cU_\cJ(\cH)$-diagonalizable. That is,  a generalization for the case of infinite point spectrum of the second condition in Theorem \ref{teo1 L19}.
The following simple example shows that the answer is in the negative.
Take $\mathrm{f}=\{ f_n\}_{n \geq 1}$ an orthonormal basis. Consider the rank-one projections defined by 
$P_n=f_n\otimes f_n$ and $E_n=e_n \otimes e_n$. Thus, $P_n- E_n \in \cF(\cH)\subseteq \cJ$, $n \geq 1$. Suppose that 
$A$ is any diagonalizable operator with simple spectrum (i.e. spectral multiplicities one) and  spectral projections $\SE{P}{n}$.
Assume further that 
the orthonormal basis $\mathrm{f}$ satisfies that
$\|f_n - \alpha e_n\| \geq \delta$, for some $\delta >0$ and all $\alpha \in \mathbb{T}$, $n \geq 1$ (we can always construct such $\mathrm{f}$, see \cite[Prop. 3.6]{BPW16}). Then, 
according to Proposition \ref{pro 35 BPW16}, there is no unitary $U\in \cU_{\cK(\cH)}(\cH)$ such that $UAU^*\in \cD$. If $\cJ$ is a proper operator ideal then $\cJ \subseteq \cK(\cH)$ and hence $A$ cannot be $\cU_\cJ(\cH)$-diagonalizable.

\medskip

\noi $ii)$ On the other hand, we remark that the condition $P_n-E_n \in \cJ$ for all $n\geq 1$ was derived from the 
conditions in Eq. \eqref{eq cond caract1} in Lemma \ref{finite diff}. Hence by Theorem \ref{main result} one can also deduce the existence of diagonal projections $\SE{E}{n}$ such that $P_n-E_n \in \cJ$ for all $n\geq 1$, whenever $A$ is $\cU_\cJ(\cH)$-diagonalizable with spectral projections $\SE{P}{n}$ and $\cJ$ is a proper arithmetic mean closed operator ideal.

\medskip

\noi $iii)$ The series in 
Eq. (\ref{eq cond caract1}) are supposed to converge weakly. Of course, this is weaker than convergence in the operator norm, and thus leads to an easier criterion to verify. However, an inspection of the proof of Theorem \ref{main result}, in particular the convergence in (\ref{series varias}), reveals that if both series satisfy (\ref{eq cond caract1}) with weak convergence, then it turns out that they must converge in the operator norm. 

\medskip

\noi $iv)$ Let $\cJ$ be an (arbitrary) proper operator ideal, let $A\in\cB(\cH)$ with infinite point spectrum and assume that there exists $U\in \cU_\cJ(\cH)$ such that $UAU^*\in\cD$, i.e. $A$ is $\cU_\cJ(\cH)$-diagonalizable. Let $\{P_n\}_{n\geq 1}$ denote the spectral projections of $A$ and let $E_n=UP_nU^*\in\cD$, for $n\geq 1$. Then, an inspection of Eq. \eqref{series varias} together with Remark \ref{pinching properties} show that, in general, $\sum_{n\geq 1} (I-E_n) P_n  \in\cJ^{-\rm  am}\subseteq \cK(\cH)$, where $\cJ^{-\rm  am}$ denotes the arithmetic mean closure of $\cJ$ (see Section \ref{Prelim}); in particular, we get  that the previous series determines a compact operator. A similarly argument shows that   
$\sum_{n\geq 1} E_n (I- P_n)  \in\cJ^{-\rm  am}$. 
\end{rem}

Next, we  show that the assumption that $\cJ$ is an arithmetic mean closed ideal cannot be removed  
 in the second assertion of Theorem \ref{main result}.

\begin{exa}\label{counterexample amc}
Let $\cF:=\cF(\cH)$ be the ideal of finite rank operators, whose arithmetic mean closure is $\cF^{-\mathrm{am}}=\fS_1(\cH)$, the ideal of trace class operators.  Recall that $\e=\{ e_n\}_{n \geq 1}$ is our fixed orthonormal basis. Take  $X=f \otimes f$, where $\|f\|=1$ and $\PI{f}{e_n}\neq 0$, for $n\geq 1$. Now consider $U=e^{iX}=I + (e^i-1)X \in \cU_{\cF}(\cH)$ and let $\{ P_n\}_{n \geq 1}$ be the  sequence of projections defined by $P_n=U(e_n\otimes e_n) U^*$, for $n \geq 1$. Let $\SE{\la}{n}$ be a bounded sequence of complex numbers such that $\la_n\neq \la_m$, for $n\neq m$, and let
$$
A=\sum_{n\geq  1}\la_n\, P_n\in\cD_{\cF, \,\cB(\cH)}\,.
$$ 
Assume that there exists a sequence $\{E_n\}_{n\geq 1}$ of diagonal projections such that 
\beq\label{eq cond finit1}
\sum_{n\geq  1}(I-E_n)P_n \in \cF \, \, \, \, \, \text{ and } \, \, \, \, \, \sum_{n\geq  1}E_n (I-P_n) \in \cF\,,
\eeq where we assume
that the convergence is in the operator norm 
(see Remark \ref{additional rem} $iii)$).  
In particular, 
we see that
$$
\|P_n-E_n\|\leq \|(I-E_n)P_n\|+\|E_n(I-P_n)\|\xrightarrow[n\rightarrow \infty]{} 0\,.
$$
Let $n_0 \geq 1$ be such that $\|P_n-E_n\|<1/2$ and $|\langle f,e_n\rangle |<1/2$ for $n\geq n_0$. Then, we have that $E_n=e_n\otimes e_n$, for $n\geq n_0$. Indeed, assume that $e_n\notin R(E_n)$. In this case $E_ne_n=0$, and since $Ue_n\in R(P_n)$ is a unit vector, we get that 
\begin{align*}
1/2 & >\|P_n-E_n\|\geq \|Ue_n-E_n Ue_n\| \\ & 
=\|e_n+(e^i-1)\langle e_n,f\rangle (f-E_nf)\|  \geq 1- |\langle e_n,f\rangle|>1/2, 
\end{align*}
where we have used that $|(e^i-1)|\,\|(f-E_nf)\|\leq 1$. The previous contradiction proves that $e_n\in R(E_n)$. On the other hand, if $e_m\in R(E_n)$ for some $m\neq n$, then
\begin{align*}
1/2 & >\|P_n-E_n\|\geq \|P_n e_m-e_m\| \\
& =\|(e^i-1)\langle e_n,f\rangle \langle e_m,f\rangle \, Ue_n - e_m\|
\geq 1- |\langle e_n,f\rangle|>1/2
\end{align*}
where we have used that $\|(e^i-1)\langle e_m,f\rangle\ Ue_n\|\leq 1$.

Recall that by Lemma
\ref{finite diff} we know that $E_n-P_n\in\cF$ for $n\geq 1$.
Since $P_n$ is a rank-one projection and $E_n-P_n$ is a finite rank operator, we see that $E_n$ is a finite rank projection. These facts show that we can replace the initial projections $E_n$ by $e_n\otimes e_n$ for $1\leq n\leq n_0-1$ and the resulting sequence (that we still denote by) $\{E_n=e_n\otimes e_n\}_{n\geq 1}$ is a decomposition of the identity that also verifies the conditions in Eq. \eqref{eq cond finit1}.

Notice that the pinching $\cC_\cE(X)$ 
of $X$ with respect to $\cE=\{E_n\}_{n\geq 1}$
is given by
$$
\cC_\cE(X)=\sum_{n \geq 1}E_nXE_n =\sum_{n \geq 1}|\PI{f}{e_n}|^2 E_n.
$$
Observe that $\cC_\cE(X) \in \fS_1(\cH)\setminus \cF$,
so then $\cC_\cE(U-I) \notin \cF$. 
Next we note that 
$$
E_n(I-P_n)=E_n(E_n-P_n)=-E_n((U-I)E_nU^* + E_n(U^*-I)),
$$
which implies 
$$
\sum_{n \geq 1}E_n(I-P_n)=-\cC_\cE(U-I)U^* + U^*-I = -(\cC_\cE(U-I)+ (e^{i}-1)X) U^*\notin \cF.
$$
This last fact contradicts Eq. \eqref{eq cond finit1}. We can proceed similarly with the other series. 
\end{exa}

\medskip

We end this section with some consequences of Theorem \ref{main result}: we obtain 
a characterization of operators $A\in\cB(\cH)$ that are $\cU_\cJ(\cH)$-diagonalizable for proper arithmetic mean closed operator ideals $\cJ$ with some further properties and a characterization of the existence of $U\in\cU_\cJ(\cH)$ that conjugate two given decompositions of the identity. We remark that in case $A$ is a finite spectrum operator or in case the decompositions of the identity are finite, then these results follow directly from Theorem \ref{teo1 L19 original}. Hence, we consider their respective infinite cases.

Recall that given an operator ideal $\cJ$, then $\cJ^2:=$ Span$\{ AB: A, B \in \cJ \}$ is also an operator ideal.

\begin{cor}\label{coro main result}
Let $\cJ$ be a proper operator ideal such that $\cJ^2$ is arithmetic mean closed.  Let $A$ be a  diagonalizable operator with infinite spectrum and with  spectral projections $\{ P_n\}_{n\geq 1}$. The following conditions are equivalent:
\begin{enumerate} 
\item[i)] $A$ is $\cU_\cJ(\cH)$-diagonalizable.
\item[ii)] There is a sequence $\{ E_n\}_{n\geq 1}$ of diagonal projections such that 
\beq\label{eq cond carac coro}
\sum_{n\geq 1} (P_n-E_n)^2\in\cJ^2\,,
\eeq
where the series  is assumed to converge in the WOT. 
\end{enumerate}
\end{cor}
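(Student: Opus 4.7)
The plan is to prove both directions by reducing to Theorem \ref{main result}, using as the main tool the self-adjoint identity
\[
(P_n-E_n)^2 = (I-E_n)P_n(I-E_n) + E_n(I-P_n)E_n,
\]
together with the commutator formula $P_n-E_n=[P_n,X]U^*$ (so that $(P_n-E_n)^2=[P_n,X][P_n,X]^*$) whenever $E_n=UP_nU^*$ and $X=U-I$.

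For the forward direction (i)$\Rightarrow$(ii), I set $E_n=UP_nU^*$ and expand
\[
(P_n-E_n)^2 = P_nX(I-P_n)X^*P_n + (I-P_n)XP_nX^*(I-P_n).
\]
Summing with $P_nP_m=\delta_{nm}P_n$ yields $\sum_n(P_n-E_n)^2=T_1+T_2$ with $T_1=\cC_\cP(XX^*)-\cC_\cP(X)\cC_\cP(X)^*\ge 0$ and $T_2=(X-\cC_\cP(X))(X-\cC_\cP(X))^*\ge 0$. Since $\cJ^2$ is arithmetic mean closed and $XX^*\in\cJ^2$, Remark \ref{pinching properties} gives $\cC_\cP(XX^*)\in\cJ^2$, so $T_1\in\cJ^2$ by hereditariness of the positive cone. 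Then $\cC_\cP(X)\cC_\cP(X)^*=\cC_\cP(XX^*)-T_1\in\cJ^2$; taking the positive square root (a standard fact for operator ideals) and applying the polar decomposition of $\cC_\cP(X)$ force $\cC_\cP(X)\in\cJ$, hence $X-\cC_\cP(X)\in\cJ$ and $T_2\in\cJ\cdot\cJ=\cJ^2$.

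For the converse (ii)$\Rightarrow$(i), from $\sum(P_n-E_n)^2=T\in\cJ^2$ with WOT convergence, the pointwise positive decomposition above yields $\sum(I-E_n)P_n(I-E_n)\le T\in\cJ^2$, hence this sum lies in $\cJ^2$. Using $P_nP_m=\delta_{nm}P_n$ to cancel cross terms, it equals $RR^*$ where $R:=\sum(I-E_n)P_n$ is well-defined as a bounded operator: the adjoint series $R^*y=\sum P_n(I-E_n)y$ is an orthogonal sum in the mutually orthogonal subspaces $R(P_n)$ with $\sum\|P_n(I-E_n)y\|^2\le\langle RR^*y,y\rangle<\infty$, hence norm-convergent. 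So $RR^*\in\cJ^2$ forces $R\in\cJ$.

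To secure the remaining hypothesis of Theorem \ref{main result}, I use the WOT identity $\sum(P_n-E_n)^2=\sum P_n+\sum E_n-\sum P_nE_n-\sum E_nP_n$ together with $\sum P_n=I$, $\sum E_nP_n=I-R$ (from the definition of $R$), and $\sum P_nE_n=I-R^*$ (adjoint) to solve for $\sum E_n=I+T-R-R^*$, whence $S:=\sum E_n(I-P_n)=T-R^*\in\cJ$. Theorem \ref{main result} now applies to conclude that $A$ is $\cU_\cJ(\cH)$-diagonalizable. The main obstacle is carefully tracking the modes of convergence (weak, strong, norm) so that the identity manipulations are rigorous; notably, the argument requires no pre-orthogonalization of the $\{E_n\}$, relying only on the orthogonality of the spectral family $\{P_n\}$.
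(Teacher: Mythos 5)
Your proof is correct and follows the paper's overall strategy of reducing to Theorem \ref{main result}, but with genuinely different intermediate decompositions in both directions. In the forward direction, where the paper expands $\sum(P_n-E_n)^2$ into four terms $KK^*+K\,\cC_\cP(U^*K)+\cC_\cP(K^*U)K^*+\cC_\cP(K^*K)$ (with $K=U-I$) and then invokes arithmetic-mean-closedness of both $\cJ$ \emph{and} $\cJ^2$ to handle the three pinching terms, you instead group into the two positive blocks $T_1=\cC_\cP(XX^*)-\cC_\cP(X)\cC_\cP(X)^*$ and $T_2=(X-\cC_\cP(X))(X-\cC_\cP(X))^*$, use only $\cJ^2$ a.m.-closed plus hereditariness to get $T_1\in\cJ^2$, and then extract $\cC_\cP(X)\in\cJ$ by the square-root/polar argument. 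This is slightly more economical: you never need the paper's opening remark that $\cJ$ itself is a.m.\ closed (a fact the paper asserts without proof). In the converse direction you recover the paper's key step ($RR^*\in\cJ^2$ via orthogonality of the ranges $R(P_n)$, hence $R\in\cJ$), but then derive the second condition by solving the linear system $\sum E_n=I+T-R-R^*$ rather than by the paper's shorter per-$n$ identity $(P_n-E_n)^2 = P_n(I-E_n)+E_n(I-P_n)$, which gives $\sum E_n(I-P_n)=T-R^*\in\cJ$ directly; both routes are valid and in fact algebraically equivalent. Two small points to tidy: (a) when you justify the well-definedness of $R$, the bound $\sum\|P_n(I-E_n)y\|^2\le\langle RR^*y,y\rangle$ uses $R$ before it is defined — you should instead write $\sum\|P_n(I-E_n)y\|^2=\langle T_1 y,y\rangle$ with $T_1:=\sum(I-E_n)P_n(I-E_n)$, which is already known bounded as a monotone WOT-limit dominated by $T$; (b) in the forward direction you should note, as the paper does, that the relevant series actually converge in norm (this follows from Remark \ref{pinching properties} and the argument of Eq.\ \eqref{series varias}), since condition (ii) asserts at least WOT-convergence.
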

\begin{proof}
 Notice that under our present assumptions, $\cJ$ is also a proper arithmetic mean closed operator ideal.
Assume that $A$ is $\cU_\cJ(\cH)$-diagonalizable and let $U\in\cU_\cJ(\cH)$ be such that $UAU^*=B\in\cD$. 
If we let $K=U-I \in \mathcal{J}$ and $E_n=UP_nU^*\in\cD$, for $n\in\N$, then
$$  P_ n - E_n=P_n - KP_nU^* - P_n K^* - P_n= - KP_n U^* - P_n K^*=- UP_n K^*  - K P_n \,,$$ 
where we used that $P_ n - E_n$ is self-adjoint. Hence,
\begin{align*}
 \sum_{n\geq 1}(P_n- E_n)^2 & = \sum_{n\geq 1}(KP_nU^* + P_n K^*) (UP_n K^*  + K P_n)\\  
& = KK^*+  K \sum_{n\geq 1}P_nU^* KP_n +  \left(\sum_{n\geq 1}P_nK^*U P_n \right)\,K^*+ \sum_{n\geq 1} P_n K^*KP_n\in\cJ^2
\end{align*}
where each series converges in the operator norm by the same arguments as in Eq. (\ref{series varias}), and where we have used that $\mathcal{J}$ and $\cJ^2$ are arithmetic mean closed (see Remark \ref{pinching properties}). Thus, $\sum_{n\geq 1}(P_n- E_n)^2 \in \mathcal{J}^2$.

\medskip

Conversely, note that 
\begin{align*}
\sum_{n\geq 1}(P_n-E_n)^2 & = \sum_{n\geq 1}(I-E_n)P_n(I-E_n)  +   \sum_{n\geq 1}E_n(I-P_n)E_n \\
& \geq \sum_{n\geq 1}(I-E_n)P_n(I-E_n),
\end{align*}
which implies that the last series belongs to $\cJ^2$ since operator ideals are hereditary. Then, using that $\SE{P}{n}$ are mutually orthogonal, we get that $|\sum_{n\geq 1}P_n(I-E_n)|^2 \in \cJ^2$. By the polar decomposition, this implies that
$
\sum_{n\geq 1}P_n(I-E_n) \in \cJ.
$
Using that the involution is WOT-continuous, we get $\sum_{n\geq 1}(I-E_n)P_n \in \cJ$.
Since $\cJ^2 \subseteq \cJ$, and 
$$
\sum_{n\geq 1}(P_n-E_n)^2 = \sum_{n\geq 1}P_n(I-E_n)  +   \sum_{n\geq 1}E_n(I-P_n),
$$
we obtain $\sum_{n\geq 1}E_n(I-P_n) \in \cJ$. The result now follows by Theorem \ref{main result}.
\end{proof}

\begin{exa}
In the particular case $\cJ=\fS_2(\cH)$, then $\cJ^2=\fS_1(\cH)$, which is a proper arithmetic mean closed ideal. Hence, 
 given
a (normal) diagonalizable operator $A$ with spectral projections $\{ P_n\}_{n=1}^N$ $(N\in\N$ or $N= \infty)$: $A$ is $\cU_{\fS_2(\cH)}(\cH)$-diagonalizable if and only if there exist diagonal projections $\{E_n\}_{n=1}^N$ such that
$$
\tr\left(\sum_{n=1}^N (P_n-E_n)^2\right)=\sum_{n=1}^N\|P_n-E_n\|_{\fS_2(\cH)}^2<\infty\,.
$$  
Indeed, if $N\in\N$, then this follows from Loreaux's Theorem \ref{teo1 L19 original}; and if $N=\infty$, then it follows from Corollary \ref{coro main result}.
\end{exa}

\begin{cor}\label{cor sobre desc indent}
Let $\cJ$ be a proper arithmetic mean closed  operator ideal and let $\{ E_n \}_{n\geq 1}$ and $\{ P_n\}_{n\geq 1}$ be two (infinite) decompositions of the identity. Then there exists $U\in\cU_\cJ(\cH)$ such that 
$U\,P_nU^*=E_n$, for all $n \in \N$, if and only if $[P_n:E_n]=0$ for $n\in\N$, and the conditions in Eq. \eqref{eq cond caract1} hold,
 where both series always converge weakly in $\cB(\cH)$.
\end{cor}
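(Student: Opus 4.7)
For the forward implication, suppose $U \in \cU_\cJ(\cH)$ satisfies $UP_nU^* = E_n$ for all $n$. Applying Proposition \ref{cond proj res diag} to each pair $(P_n, E_n)$ immediately yields $P_n - E_n \in \cJ$ and $[P_n:E_n] = 0$. For the series in Eq. \eqref{eq cond caract1}, the identity $U^*E_nU = P_n$ gives $(I-E_n)P_n = (I-E_n)(U^*-I)E_nU$; summing and telescoping against $\sum_{n\geq 1}E_n = I$ leads to
$$\sum_{n\geq 1}(I-E_n)P_n = \bigl[(U^*-I) - \cC_\cE(U^*-I)\bigr]\,U \in \cJ,$$
where $\cC_\cE$ denotes pinching with respect to $\cE = \{E_n\}_{n\geq 1}$, and the arithmetic mean closed hypothesis ensures $\cC_\cE(U^*-I) \in \cJ^{-\rm am} = \cJ$ by Remark \ref{pinching properties}. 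The other series is handled symmetrically.

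For the reverse implication, the strategy is to mimic the arguments of Lemma \ref{finite diff} and Proposition \ref{partial isom}. Since $\{P_n\}$ and $\{E_n\}$ are already mutually orthogonal (being decompositions of the identity), the technicalities of Lemma \ref{proj diag} are bypassed. By WOT-continuity, multiplying the first series on the right by $P_n$ (resp. the second on the left by $E_n$) extracts $(I-E_n)P_n, E_n(I-P_n) \in \cJ$ termwise, whence $P_n - E_n = P_n(I-E_n) - (I-P_n)E_n \in \cJ$ for every $n$. Then, repeating the construction in Proposition \ref{partial isom} verbatim, we obtain an integer $n_1 \geq 1$ and a partial isometry $V$ with initial projection $P := \sum_{n>n_1}P_n$ and final projection $E := \sum_{n>n_1}E_n$ satisfying $VP_nV^* = E_n$ for $n > n_1$, $V - P \in \cJ$, $P - E \in \cJ$, and $[P:E] = 0$.

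To finish, set $P_0 := P$ and $E_0 := E$, so that $\{P_n\}_{n=0}^{n_1}$ and $\{E_n\}_{n=0}^{n_1}$ are finite decompositions of the identity with $P_n - E_n \in \cJ$ and $[P_n:E_n] = 0$ for every $0 \leq n \leq n_1$. Applying \cite[Lemma 3.2]{L19} directly (no prior adjustment via \cite[Lemma 3.3]{L19} is needed, in contrast to the proof of Theorem \ref{main result}), we obtain $U_0 \in \cU_\cJ(\cH)$ with $U_0 P_n U_0^* = E_n$ for $0 \leq n \leq n_1$. Then $V_\perp := U_0(I-P)$ is a partial isometry with initial $I-P$, final $I-E$, and $V_\perp - (I-P) \in \cJ$; the desired unitary is $U := V + V_\perp \in \cU_\cJ(\cH)$, which conjugates $P_n$ to $E_n$ for all $n \geq 1$. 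The main delicate point is verifying that the ``tail'' partial isometry $V$ and the ``finite-rank'' piece $V_\perp$ combine into an element of $\cU_\cJ(\cH)$, which is precisely ensured by the condition $[P:E] = 0$ obtained in the preceding step.
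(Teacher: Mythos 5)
Your proof is correct and follows essentially the same route as the paper. The forward direction is identical to the paper's (Proposition~\ref{cond proj res diag} for $[P_n:E_n]=0$ and the pinching computation from the proof of Theorem~\ref{main result} for membership in $\cJ$). For the reverse direction, the paper chooses an orthonormal basis making $\{E_n\}$ diagonal and then invokes the proof of Theorem~\ref{main result}, noting that the family $\{E'_n\}$ produced there coincides with $\{E_n\}$ because $[P_n:E_n]=0$ is already known; your version unpacks this by observing directly that the constructions of Lemma~\ref{finite diff} and Proposition~\ref{partial isom} use only mutual orthogonality of $\{E_n\}$ (not diagonality), and that the hypothesis $[P_n:E_n]=0$ for all $n$ renders the correction via \cite[Lemma 3.3]{L19} unnecessary so that \cite[Lemma 3.2]{L19} applies at once. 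The two arguments are the same in substance.
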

\begin{proof}
 Assume first that there exists $U\in\cU_\cJ(\cH)$ such that 
$U\,P_nU^*=E_n$, for $n\in\N$.  Then, by Proposition \ref{cond proj res diag} we have that $[P_n:E_n]=0$, for $n\in\N$. On the other hand, we can argue as in the proof of the second assertion in Theorem \ref{main result} and conclude that the conditions in Eq. \eqref{eq cond caract1} hold. 

Conversely, assume that the conditions in Eq. \eqref{eq cond caract1} hold. 
We can choose the orthonormal basis $\e=\SE{e}{n}$ in such a way that the family $\SE{E}{n}$ lies in its associated diagonal algebra $\cD$. Now we can argue as in the proof of the first assertion in Theorem \ref{main result}. If we follow that argument, we see that the family $\{E'_n\}_{n=0}^{n_1}$ actually coincides with $\{E_n\}_{n=0}^{n_1}$ with $E_0=\sum_{n>n_1}E_n$. From this last fact, we now see that there exists $U\in\cU_\cJ(\cH)$ such that $U\,P_nU^*=E_n$, for all $n\in\N $, as desired. 
\end{proof}

Let $\cJ$ be an operator ideal in $\cB(\cH)$ and let $\mathrm{e}=\{e_n\}_{n\geq 1}$ $\mathrm{f}=\{f_n\}_{n\geq 1}$ be two orthonormal bases of $\cH$. According to \cite[Remark 4.5]{BPW16} these bases are \textit{$\cJ$-equivalent} if there exists $W\in\cU_\cJ(\cH)$
such that $Wf_n=e_n$, for $n\in\N$.

\begin{rem}\label{rem bases J equiv} Let $\mathrm{e}=\{e_n\}_{n\geq 1}$ $\mathrm{f}=\{f_n\}_{n\geq 1}$ be two orthonormal bases of $\cH$.  In \cite[Problem 6.3]{BPW16} the authors ask for
necessary or sufficient conditions for the $\cJ$-equivalence of $\mathrm{e}$ and $\mathrm{f}$. Notice that Corollary \ref{cor sobre desc indent} provides with a \textit{projective} solution to this problem, 
in case $\cJ$ is an arithmetic mean closed ideal. Indeed, given $\mathrm{e}$ and $\mathrm{f}$ as before, we can consider the associated decompositions of the identity $E_n=e_n\otimes e_n$ and $P_n=f_n\otimes f_n$, for $n\in\N$. Then,  Corollary \ref{cor sobre desc indent}
characterizes the existence of $U\in\cU_\cJ(\cH)$ such that $UP_nU^*=E_n$, i.e. such that $Uf_n=\alpha_n\,e_n$ for some $\alpha_n\in\C$ with $|\alpha_n|=1$, for $n\in\N$. Nevertheless, this result does not seem to be satisfactory.  
\end{rem}

We now obtain necessary and sufficient conditions for $\cJ$-equivalence between orthonormal bases for an arbitrary proper operator ideal $\cJ$. In order to do this, we first notice the following: if $\mathrm{e}=\{e_n\}_{n\geq 1}$, $\mathrm{f}=\{f_n\}_{n\geq 1}$ and $\mathrm{g}=\{g_n\}_{n\geq 1}$ are orthonormal bases of $\cH$ then there exists a (unique) well defined $T\in \cB(\cH)$ such that $T(g_n)=e_n-f_n$, for $n\in\N$ (this last fact is equivalent to the assertion that $\{e_n-f_n\}_{n\geq 1}$ is a Bessel sequence in $\cH$, see \cite{Chris16}). Indeed, notice that the unitary operators $U_1,\,U_2\in\cU(\cH)$ given by $U_1(g_n)=e_n$, $U_2(g_n)=f_n$, for $n\in\N$, are such that $T=U_1-U_2$. 

\begin{cor}\label{cor jequiv bases}
Let $\cJ$ be a proper operator ideal  and let  $\mathrm{e}=\{e_n\}_{n\geq 1}$, $\mathrm{f}=\{f_n\}_{n\geq 1}$ be two orthonormal bases in $\cH$. Then, $\mathrm{e}$ and $\mathrm{f}$ are $\cJ$-equivalent if and only if for  every (equivalently, some) orthonormal basis $\mathrm{g}=\{g_n\}_{n\geq 1}$ of $\cH$ the operator $T\in \cB(\cH)$ given by $T(g_n)=e_n-f_n$, for $n\in\N$, verifies that $T\in\cJ$.
\end{cor}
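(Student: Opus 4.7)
The plan is to exploit the observation made immediately before the statement: given any third orthonormal basis $\mathrm{g}=\{g_n\}_{n\geq 1}$, the unitaries $U_1,U_2\in\cU(\cH)$ defined by $U_1(g_n)=e_n$ and $U_2(g_n)=f_n$ satisfy $T=U_1-U_2$, where $T$ is the operator in the statement. So the proof amounts to showing that $\cJ$-equivalence of $\mathrm{e}$ and $\mathrm{f}$ is equivalent to $U_1-U_2\in\cJ$, and that this last condition is independent of the choice of $\mathrm{g}$.

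For the direct implication, assume there exists $W\in\cU_\cJ(\cH)$ such that $Wf_n=e_n$ for all $n\in\N$. Fix an arbitrary orthonormal basis $\mathrm{g}$ and consider $U_1,U_2$ as above. Then $WU_2(g_n)=Wf_n=e_n=U_1(g_n)$ for every $n$, hence $WU_2=U_1$. Consequently,
\[
T=U_1-U_2=(W-I)U_2\in\cJ,
\]
because $W-I\in\cJ$, $U_2\in\cB(\cH)$, and $\cJ$ is a two-sided ideal. This shows the implication for every choice of $\mathrm{g}$, which also explains why in the statement the condition can be verified for every (or, equivalently, for some) orthonormal basis $\mathrm{g}$.

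For the converse, assume that there exists some orthonormal basis $\mathrm{g}$ of $\cH$ such that $T=U_1-U_2\in\cJ$. Define $W:=U_1\,U_2^*\in\cU(\cH)$. By construction,
\[
Wf_n=U_1U_2^*f_n=U_1 g_n=e_n\peso{for all} n\in\N.
\]
Moreover,
\[
W-I=U_1U_2^*-U_2U_2^*=(U_1-U_2)U_2^*=T\,U_2^*\in\cJ,
\]
again using that $\cJ$ is a two-sided ideal. Hence $W\in\cU_\cJ(\cH)$, which proves that $\mathrm{e}$ and $\mathrm{f}$ are $\cJ$-equivalent. The main (only) conceptual point is the factorization $U_1-U_2=(W-I)U_2$ (respectively $W-I=(U_1-U_2)U_2^*$), which trivializes the argument; there is no real obstacle beyond verifying these identities and using that $\cJ$ is an ideal.
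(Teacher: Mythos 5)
Your proof is correct, and it takes a genuinely more elementary route than the paper's. The paper proves this corollary by invoking Proposition \ref{partial iso charac}, the partial-isometry analogue of the restricted-diagonalization criterion for projections: applying it with $V_1=U_2$, $V_2=U_1$ (noting $N(U_1)=N(U_2)=\{0\}$) directly yields the equivalence between $T=U_1-U_2\in\cJ$ and the existence of $W\in\cU_\cJ(\cH)$ with $WU_2=U_1$. Your argument bypasses that machinery entirely by observing that, since $U_1$ and $U_2$ are unitaries and not merely partial isometries, one can explicitly set $W:=U_1U_2^*$ and verify the two factorizations $U_1-U_2=(W-I)U_2$ and $W-I=(U_1-U_2)U_2^*$, each of which shows the relevant operator lies in the ideal. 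This is cleaner and self-contained for the statement at hand; Proposition \ref{partial iso charac} is genuinely needed when the partial isometries have nontrivial kernels or finite rank (where one must appeal to essential codimension and the Serban--Turcu lemma), but in the fully invertible case the proposition reduces to exactly your computation. The paper's route has the organizational merit of exhibiting the corollary as a special case of the general framework it builds; yours has the merit of making the result transparent without any of that apparatus. Both approaches correctly justify the "for every (equivalently, some) $\mathrm{g}$" clause, since the forward direction holds for arbitrary $\mathrm{g}$ while the converse requires only one.
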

\begin{proof}
Let $\mathrm{e}$, $\mathrm{f}$ and $\mathrm{g}$ be orthonormal bases. Consider 
the unitary operators $U_1,\,U_2\in\cU(\cH)$ given by $U_1(g_n)=e_n$, $U_2(g_n)=f_n$, for $n\in\N$; in this case we have that $T=U_1-U_2\in \cB(\cH)$. Notice that $\mathrm{e}$ and $\mathrm{f}$ are $\cJ$-equivalent if and only if there exists $W\in\cU_\cJ(\cH)$ such that $WU_2=U_1$. Since $U_1$ and $U_2$ are, in particular, partial isometries such that $N(U_1)=N(U_2)=\{0\}$ then, Proposition \ref{partial iso charac}
together with the previous remark show that $\mathrm{e}$ and $\mathrm{f}$ are $\cJ$-equivalent if and only if $U_1-U_2=T\in\cJ$.
\end{proof}

\begin{exas}\label{exa jequiv bases}
Let  $\mathrm{e}=\{e_n\}_{n\geq 1}$, $\mathrm{f}=\{f_n\}_{n\geq 1}$ be two orthonormal bases of $\cH$. 

\medskip

\noi $i)$. Let $\cJ=\fS_2(\cH)$, and notice that if $\mathrm{g}=\{g_n\}_{n\geq 1}$ is an orthonormal basis of $\cH$ and $T\in \cB(\cH)$ is such that $T(g_n)=e_n-f_n$, for $n\in\N$, then $T\in \fS_2(\cH)$ if and only if 
$\sum_{n\geq 1}\|Tg_n\|^2=\sum_{n\geq 1}\|e_n-f_n\|^2<\infty$. Hence, we recover from Corollary \ref{cor jequiv bases} the following well-known fact: $\mathrm{e}$ and $\mathrm{f}$ are $\fS_2(\cH)$-equivalent if and only if $\sum_{n\geq 1}\|e_n-f_n\|^2<\infty$.

\medskip

\noi $ii)$. Let $\cJ=\cK(\cH)$ and $\mathrm{g}=\{g_n\}_{n\geq 1}$ be an orthonormal basis of $\cH$, and let $T\in \cB(\cH)$ be such that $T(g_n)=e_n-f_n$, for $n\in\N$. For each $N\in\N$, let $Q_N$ denote the orthogonal projection onto $\text{Span}\{g_n\}_{n=1}^N$. Then, $T\in \cK(\cH)$ if and only if 
$\lim_{N\rightarrow \infty}\|T-TQ_N\|=0$. Hence, Corollary \ref{cor jequiv bases} shows that 
$\mathrm{e}$ and $\mathrm{f}$ are $\cK(\cH)$-equivalent if and only if for every $\varepsilon>0$, there exists $N\in\N$ such that for all $\alpha=(\alpha_n)_{n\geq 1}\in\ell^2(\N)$ we have that 
$$
\left\|\sum_{n>N}\alpha_n\,(e_n-f_n)\right\|\leq \varepsilon\, \|\alpha\|_{\ell^2}\,.
$$
\end{exas}

\section{On the structure of the set $\cD_\cJ^{sa}$}\label{structure of dj}

This section is devoted to study the way that the self-adjoint part of $\cD_\cJ$ sits inside the self-adjoint part of an operator ideal $\cJ$. In particular, we 
answer some questions raised in \cite{BPW16} on the structure of restricted unitary orbits.
Let $\cJ^{sa}$ be the self-adjoint part of the operator ideal $\cJ$. The self-adjoint part of $\cD_\cJ$ is given by
$$
\cD_{\cJ}^{sa}:=\cD_{\cJ} \cap \cJ^{sa}=\{ \, VDV^*\, : \,   D=D^* \in \cD \cap \cJ, \, V \in \cU_{\cJ}(\cH)  \,\}.
$$
As it is shown in \cite{BPW16}, the inclusion $\cD_\cJ^{sa}\subsetneq\cJ^{sa}$ is proper in the general situation. The authors of that work also describe an example in which $\cD_\cJ^{sa}$ is not a linear subspace, but the linear span satisfies
$$
\text{Span}_\R(\cD_\cJ^{sa})=\cJ^{sa}.
$$  
In fact, their example is the ideal $\cJ=B(\cH)$. This naturally led them to consider (see \cite[Question 3.3]{BPW16}) if the set $\cD_{\cJ}^{sa}$ is a proper subset of $\cJ^{sa}$
for every operator ideal $\cJ$; similarly, they asked about the relation between $\cJ^{sa}$ and the real linear span of $\cD_{\cJ}^{sa}$ for proper operator ideals. Now we show that in the case  which  
$\cJ\neq \cF(\cH)$ is a proper operator ideal, then $\cD_\cJ^{sa}$ is not a linear space. Moreover, if we assume further that $\cJ\neq \cJ^2$, then we prove that $\text{Span}_\R(\cD_\cJ^{sa})\neq\cJ^{sa}$.

\begin{teo}\label{teo una pregunta1}
Let $\cJ \neq \cF(\cH) $ be a proper operator ideal. Then, $\cD_\cJ^{sa}$ is not a linear space. 
\end{teo}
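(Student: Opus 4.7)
The plan is to exhibit explicitly two self-adjoint operators $A_1,A_2\in\cD_\cJ^{sa}$ whose sum fails to be $\cU_\cJ(\cH)$-diagonalizable, thereby showing that $\cD_\cJ^{sa}$ is not closed under addition. The candidates are $A_1=WDW^*$ and $A_2=-D$, where $D\in\cD\cap\cJ^{sa}$ is an infinite-rank diagonal operator and $W\in\cU_\cJ(\cH)$ is a suitably chosen block-diagonal unitary. Both $A_1$ and $A_2$ are trivially in $\cD_\cJ^{sa}$, but the difference $A_1+A_2=WDW^*-D$ will have rank-one spectral projections whose unit eigenvectors lie arbitrarily close to the ``far-from-diagonal'' vectors $(e_{2n-1}\pm e_{2n})/\sqrt{2}$, and therefore, by a variant of the argument behind Proposition~\ref{pro 35 BPW16}, cannot be $\cU_\cJ(\cH)$-diagonalized.

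For the construction, the hypothesis $\cJ\neq\cF(\cH)$ together with the hereditariness of $\Sigma(\cJ)$ yields a strictly decreasing positive sequence $(\mu_n)_{n\geq 1}\in\Sigma(\cJ)$ (for instance, $\mu_n=\alpha_n/n$ for any $\alpha\in\Sigma(\cJ)$ with infinite support). Set $D=\sum_{n\geq 1}\mu_n\,e_n\otimes e_n\in\cD\cap\cJ^{sa}$. Pairing the basis into the $2$-dimensional blocks $\mathrm{span}(e_{2n-1},e_{2n})$, let $W=\bigoplus_{n\geq 1}R(\theta_n)$ act on the $n$-th block as rotation by an angle $\theta_n$, where the $\theta_n$ are positive, decrease to $0$, satisfy $(\theta_n)\in\Sigma(\cJ)$, and are chosen generically enough that the scalars $\alpha_n:=(\mu_{2n-1}-\mu_{2n})\sin\theta_n$ are all distinct. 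Since the singular value sequence of $W-I$ is dominated by the ampliation of $(\theta_n)$, one has $W\in\cU_\cJ(\cH)$. A routine block-wise computation then shows that $WDW^*-D$ is block-diagonal, its $n$-th block being a trace-zero self-adjoint matrix with eigenvalues $\pm\alpha_n$ and unit eigenvectors $v_n^{\pm}$ satisfying $\|v_n^+-f_n\|=O(\theta_n)$ and $\|v_n^--g_n\|=O(\theta_n)$, where $f_n=(e_{2n-1}+e_{2n})/\sqrt{2}$ and $g_n=(e_{2n-1}-e_{2n})/\sqrt{2}$.

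The main step, and the main obstacle, is to show that $WDW^*-D\notin\cD_\cJ^{sa}$; this is where the compactness argument sketched in Remark~\ref{additional rem}$(i)$ enters. Assume for contradiction that some $V\in\cU_\cJ(\cH)$ diagonalizes $WDW^*-D$. Simplicity of the spectrum forces $Vv_n^{\pm}=\omega_n^{\pm}e_{\sigma_{\pm}(n)}$ for some unimodular $\omega_n^{\pm}\in\mathbb{T}$ and injective maps $\sigma_+,\sigma_-:\N\to\N$ with disjoint images; equivalently,
\[
\|v_n^{\pm}-\bar\omega_n^{\pm}e_{\sigma_{\pm}(n)}\|=\|(V^*-I)e_{\sigma_{\pm}(n)}\|\longrightarrow 0 \quad\text{as }n\to\infty,
\]
by compactness of $V^*-I\in\cJ\subseteq\cK(\cH)$ and the fact that $\sigma_{\pm}(n)\to\infty$ (since the $\sigma_{\pm}(n)$ are distinct positive integers). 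On the other hand, using the bounds $|\langle f_n,e_k\rangle|\leq 1/\sqrt{2}$ and $|\langle g_n,e_k\rangle|\leq 1/\sqrt{2}$ valid for all $k\geq 1$, the triangle inequality yields $\|v_n^{\pm}-\bar\omega_n^{\pm}e_{\sigma_{\pm}(n)}\|\geq\sqrt{2-\sqrt{2}}-O(\theta_n)$, which stays bounded below by a positive constant for all large $n$. This contradiction gives $A_1+A_2=WDW^*-D\notin\cD_\cJ^{sa}$. Since $A_1=WDW^*\in\cD_\cJ^{sa}$ and $A_2=-D\in\cD\cap\cJ^{sa}\subseteq\cD_\cJ^{sa}$, we conclude that $\cD_\cJ^{sa}$ is not closed under addition, and hence is not a real linear subspace of $\cJ^{sa}$.
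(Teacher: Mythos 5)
Your proof is correct, and it takes a genuinely different construction from the paper's, even though both rely on the same block-diagonal $2\times 2$ rotation idea. You take $A_1=WDW^*$ and $A_2=-D$ for a \emph{single} diagonal $D$ and a \emph{single} block rotation $W$, so the sum is the ``commutator-like'' operator $WDW^*-D$. The paper instead splits $D$ into its odd and even halves, rotates the two halves by opposite angles $\theta_j$ and $-\theta_j$ (so $X=VAV^*$, $Y=WBW^*$ with $V=\oplus U(\theta_j)$, $W=\oplus U(-\theta_j)$), which produces a sum whose eigenvectors are \emph{exactly} $(e_{2j-1}\pm e_{2j})/\sqrt{2}$; this lets them invoke Proposition~\ref{pro 35 BPW16} directly. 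In your version the eigenvectors are only $O(\theta_n)$-close to those vectors, so you rerun the compactness-plus-triangle-inequality argument behind Proposition~\ref{pro 35 BPW16} by hand --- which you do correctly: the computation $\|v_n^\pm-\bar\omega_n^\pm e_{\sigma_\pm(n)}\|=\|(V^*-I)e_{\sigma_\pm(n)}\|\to 0$ versus the uniform lower bound $\geq\sqrt{2-\sqrt 2}-O(\theta_n)$ gives the contradiction. A small point worth spelling out, as the paper does explicitly, is the genericity claim: you need the $\alpha_n=(\mu_{2n-1}-\mu_{2n})\sin\theta_n$ to be pairwise distinct so that the spectrum of $A_1+A_2$ is simple, otherwise ``$V$ sends eigenvectors of $A_1+A_2$ to single basis vectors'' would fail; the paper achieves this with explicit conditions on its sequence $\{\alpha_j\}$, and you should note that a similar explicit tuning of $(\mu_n)$ and $(\theta_n)$ within $\Sigma(\cJ)$ is possible (your $\mu_n=\alpha_n/n$ trick for strict monotonicity already shows the idea). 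Your construction has the advantage of fewer moving parts (one $W$, one $D$, and a sign flip), while the paper's symmetric $\pm\theta$ choice buys exact eigenvectors and a clean citation of Proposition~\ref{pro 35 BPW16}.
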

\begin{proof}
Our proof is based on the construction of two operators $X,\,Y\in \cD_\cJ^{sa}$ such that 
$X+Y\notin \cD_\cJ^{sa}$. The construction has a block-diagonal structure with blocks of size 2; hence we begin with some computations related with $2\times 2$ matrices.

Consider the $2\times 2$ (real) unitary matrix
$$
U(\theta)=\begin{pmatrix} \cos (\theta) & - \sin (\theta) \\
\sin (\theta) & \cos (\theta) \end{pmatrix}, \, \, \, \, \,  \theta\in [0,\pi/2]\,.
$$
Then, the eigenvalues of this normal matrix are $e^{i\,\theta}$, $e^{-i\,\theta}$.
Hence, we can write 
$$
U(\theta)=I_2 + (U(\theta)-I_2), \, \, \, \, \,  s(U(\theta)-I_2)=(|1-e^{i\,\theta}|\coma |1-e^{-i\,\theta}|)\,.
$$
Here $I_2$ denotes the $2 \times 2$ identity matrix. Notice that in this case, given $0<t<\sqrt{2-2/\sqrt 2 }$, then there exists a unique $0<\theta<\pi/4$ such that 
$$
|1-e^{i\,\theta}|=|1-e^{-i\,\theta}|=t , \peso{given by} \theta=\arccos\left(\frac{2-t^2}{2}\right)\,.
$$
Now, consider two vectors $x,\,y\in \C^2$ such that $\|x\|=\|y\|=1$ and $0<\langle x\coma y\rangle <1$. The positive definite $2\times 2$ matrix
$xx^*+yy^*$ has two different eigenvalues $1+\langle x\coma y\rangle>0$ and $1-\langle x\coma y\rangle>0$, such that $$v_1= \frac{x+y}{\|x+y\|}$$ is a (well defined) unit norm eigenvector associated with the eigenvalue $1+\langle x\coma y\rangle$.
Finally, notice that if we consider the (real) unitaries $U(\theta)$ and $U(-\theta)$ for some $\theta\in (0,\pi/4)$ and the canonical basis of $\C^2$ given by $\{f_1,f_2\}$ then, if we set
$$
x_\theta=U(\theta)\,f_1=\begin{pmatrix} \cos(\theta) \\ \sin(\theta)\end{pmatrix} \py 
y_\theta=U(-\theta)\,f_2=\begin{pmatrix} \sin(\theta) \\ \cos(\theta)\end{pmatrix} 
$$
we have that $\|x_\theta\|=\|y_\theta\|=1$ and $0<\langle x_\theta\coma y_\theta\rangle =\sin(2\theta)< 1$. Hence, the previous computations show that the positive semidefinite $2\times 2$ matrix 
$x_\theta\,x_\theta^*+y_\theta\,y_\theta^*$ has distinct eigenvalues given by 
$1+ \sin(2\theta)$ and $1-\sin(2\theta)$, and unit norm eigenvectors,
$$
v=\frac{x_\theta+y_\theta}{\|x_\theta+y_\theta\|}=\frac{f_1+f_2}{\sqrt 2} \py  w=\frac{f_1-f_2}{\sqrt 2} .
$$
Notice that once we have computed the vector $v$, we can take $w$ to be any unit norm vector that is orthogonal to $v$, since the matrix is positive semi-definite.

We now go back to the separable infinite dimensional Hilbert space $\cH$ together with its orthonormal basis $\e=\{e_n\}_{n\geq 1}$. Consider $\cJ\neq \cB(\cH)$ and $\cJ\neq \cF(\cH)$. Then, the characteristic set $\Sigma(\cJ)\subset c_0$ and we can consider a sequence $\alpha=\{\alpha_j\}_{j \geq 1}\in \Sigma(\cJ)$ such that 
\ben
\item $0<\alpha_{j+1}<\alpha_j<\sqrt{2-2/\sqrt 2 }$, for $j\geq 1$;
\item $\alpha_j\,(1\pm \sin(2\,\arccos(\frac{2-\alpha_j^2}{2})))\neq \alpha_k\,(1\pm \sin(2\,\arccos(\frac{2-\alpha_k^2}{2})))$, for $j\neq k$.
\een The fact that 
$$
\lim_{x\rightarrow 0^+} x\,\left(1\pm \sin\left(2\,\arccos\left(\frac{2-x^2}{2}\right)\right)\right)=0
$$
implies that it is always possible to choose the sequence $\{\alpha_j\}_{j \geq 1}$ as above.
For each $j \geq 1$, we let $\theta_j=\arccos\left(\frac{2-\alpha_j^2}{2}\right)$ (notice that $\theta_j$ is a increasing function of $\alpha_j$). Hence, 
$$
\theta_j\in (0,\pi/4) \ \ , \ \ |1-e^{\pm i\theta_j}|=\alpha_j  \py
\lim_{j\rightarrow \infty} \theta_j=0\,.
$$ 
We set
$$
V=\bigoplus_{j\geq 1} \, U(\theta_j)\in \cU_\cJ(\cH)\,,
$$
where each copy $U(\theta_j)$ is acting on the subspace
 $\cH_j=\text{Span}\{e_{1+2\,(j-1)}\coma e_{2j} \}$, for $j\geq 1$.
Indeed,
$$
V=I+\bigoplus_{j\geq 1} \,(U(\theta_j)-I_2)=I+K, \, \, \, \, \,  s(K)=\{ \, ( \alpha_j\coma \alpha_j)\, \}_{j\geq 1}\in\Sigma(\cJ)\,, 
$$
since $|1-e^{\pm\, i\, \theta_j}|=\alpha_j$, by construction of $\theta_j$. Notice that we have used that characteristic sets are invariant under  ampliations (see Section \ref{Prelim}) to conclude that $s(K)\in\Sigma(\cJ)$. Hence, $K\in\cJ$.
Similarly, we consider 
$$W=\bigoplus_{j\geq 1} \, U(-\theta_j)\in \cU_\cJ(\cH)\,. $$
Consider $A,\,B\in \cD$ the self-adjoint diagonal operators given by
$$
A=\sum_{j\geq 1} \alpha_{j}\, P_{1+2(j-1)}= \bigoplus_{j\geq 1}\, \alpha_{j}\, f_1 f_1^*
\py B=\sum_{j \geq 1} \alpha_{j}\, P_{2j}=\bigoplus_{j\geq 1}\, \alpha_{j}\, f_2 f_2^* \,,
$$where $P_j$ denotes the orthogonal projection onto $\C\,e_j$, for $j\geq 1$, and the direct sums
of these $2\times 2$ blocks $\alpha_j\,f_k  f_k^*$, $k=1,2$, are considered as before (see the definitions of $V$ and $W$).
Now we define $X,\,Y\in \cD_\cJ^{sa}$ given by 
$$
X=V\,A\,V^* =\bigoplus_{j\geq 1}\, \alpha_{j}\, x_{\theta_j}x_{\theta_j}^*\py 
Y=W\,B\,W^* =\bigoplus_{j\geq 1}\, \alpha_{j}\, y_{\theta_j}y_{\theta_j}^*\,,
$$
where we have used the block diagonal structure for $A,\,B,\,V$ and $W$.
Finally, notice that 
$$
X+Y=\bigoplus_{j\geq 1}\, \alpha_{j}\, (x_{\theta_j}x_{\theta_j}^* +y_{\theta_j}y_{\theta_j}^*)\in \cJ^{sa}\,.
$$
We can further use the block diagonal structure of $X+Y$ to conclude that the eigenvalues of 
$X+Y$ are given by the strictly positive numbers
$$
\alpha_j\,(1+ \sin(2\theta_j))\py \alpha_j \,(1-\sin(2\theta_j)), \, \, \, \, \,  j \geq 1\,
$$
with corresponding eigenvectors
\beq \label{eq eigenvec1}
v_j=\frac{e_{1+2(j-1)} + e_{2j}}{\sqrt 2}\py 
w_j=\frac{e_{1+2(j-1)} - e_{2j}}{\sqrt 2}, \, \, \, \, \,  j \geq 1\,.
\eeq 
Then, by construction of $\{\alpha_j\}_{j \geq 1}$, the eigenvalues of $X+Y$ (as described above) are all simple. Therefore, we obtain the compact self-adjoint operator $X+Y$ with simple eigenvalues, whose eigenvectors are given by the Eq. \eqref{eq eigenvec1}. Now we apply Proposition \ref{pro 35 BPW16}: thus, we consider an eigenvector of $X+Y$, say 
$v_j$ (the argument with $w_j$ is completely analogous) and a vector $e_k$ in the orthonormal basis $\rm e$. Notice that in case $k\notin\{ 1+2(j-1)\coma 2j\}$, then
$\langle v_j\coma e_k\rangle =0$ and therefore, $\|v_j - \beta\,e_k\|=\sqrt 2$, for every $|\beta|=1$. In case $k=2j$ and $|\beta|=1$, then
$$
\| v_j - \beta\,e_k\|=\left\| \frac{1}{\sqrt 2}\,e_{1+2(j-1)} - \left(\frac{1}{\sqrt 2}  -  \beta \right)\,e_{2j}  \right\|\geq \frac{1}{\sqrt 2}\,.
$$
Similarly, if $k=1+2(j-1)$, then we have that $\| v_j - \beta\,e_k\|\geq 1/\sqrt 2$. 
Thus, Proposition \ref{pro 35 BPW16} implies that $X+Y\notin \cD_\cJ$.
\end{proof}

\begin{rem}
The above result implies that  $\cD_\cJ^{sa}\neq \cJ^{sa}$, when $\cJ \neq \cF(\cH)$; this fact was already proved in \cite[Corollary 3.7]{BPW16}.
\end{rem}

\begin{teo}\label{teo una pregunta2}
Let $\cJ$ be an operator ideal such that $\cJ^2\neq \cJ$.
Then, $\mathrm{Span}_\R(\cD_\cJ^{sa})\neq \cJ^{sa}$.
\end{teo}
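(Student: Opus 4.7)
The plan is to produce an $A\in \cJ^{sa}$ outside $\mathrm{Span}_\R(\cD_\cJ^{sa})$ by leveraging the fact that every element of $\cD_\cJ$ differs from a diagonal operator by an element of $\cJ^2$. First I would establish the inclusion $\mathrm{Span}_\R(\cD_\cJ^{sa}) \subseteq (\cD^{sa}\cap \cJ)+(\cJ^2)^{sa}$: writing any $V\in\cU_\cJ(\cH)$ as $V=I+K$ with $K\in\cJ$, for $D=D^*\in \cD\cap \cJ$ we have
$$
VDV^*=D+(KD+DK^*+KDK^*),
$$
where the three remaining terms lie in $\cJ^2$ (using $\cJ^3\subseteq \cJ^2$) and sum to a self-adjoint operator. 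Hence $\cD_\cJ^{sa}\subseteq (\cD^{sa}\cap\cJ)+(\cJ^2)^{sa}$; since this set is already a real linear subspace, the same containment holds for the real span.

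Next, using the hypothesis $\cJ\neq \cJ^2$, I pick $A_0\in \cJ\setminus \cJ^2$ and let $\{a_n\}_{n\geq 1}=s(A_0)$, which is a non-increasing non-negative sequence in $\Sigma(\cJ)\setminus \Sigma(\cJ^2)$. Then I define the self-adjoint operator
$$
A=\sum_{n\geq 1}a_n\,(e_{2n-1}\otimes e_{2n}+e_{2n}\otimes e_{2n-1}).
$$
This has a $2\times 2$ block-diagonal form with each block having singular values $\{a_n,a_n\}$, so by ampliation invariance of characteristic sets we obtain $A\in \cJ^{sa}$.

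Finally I would show $A\notin (\cD^{sa}\cap\cJ)+(\cJ^2)^{sa}$. Given any $D=\sum_{j\geq 1}d_j\,e_j\otimes e_j\in \cD^{sa}\cap \cJ$, the $n$-th block of $A-D$ is
$$
M_n=\begin{pmatrix}-d_{2n-1} & a_n\\ a_n & -d_{2n}\end{pmatrix},
$$
whose two singular values $t_n\geq r_n$ satisfy $t_n^2+r_n^2=\tr(M_n^*M_n)=d_{2n-1}^2+d_{2n}^2+2a_n^2\geq 2a_n^2$, so $t_n\geq a_n$. Since $\{a_n\}$ is non-increasing, for every $k$ all of $t_1,\dots,t_k$ are $\geq a_k$, whence the $k$-th largest singular value of $A-D$ satisfies $s_k(A-D)\geq a_k$. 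By heredity of the characteristic set, $A-D\in \cJ^2$ would force $\{a_n\}\in \Sigma(\cJ^2)$, a contradiction. Thus $A\notin (\cD^{sa}\cap\cJ)+(\cJ^2)^{sa}$, and consequently $A\notin \mathrm{Span}_\R(\cD_\cJ^{sa})$. The main obstacle is this last singular-value comparison: the anti-diagonal block structure of $A$ is essential, since it provides a universal lower bound on the Hilbert-Schmidt norm of each $2\times 2$ block of $A-D$ independent of $D$, forcing the singular values of $A-D$ to dominate $\{a_n\}$ entrywise.
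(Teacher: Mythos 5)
Your proof is correct. The central observation---that every element of $\cD_\cJ^{sa}$ is a diagonal self-adjoint operator in $\cJ$ plus a self-adjoint element of $\cJ^2$, so that $\mathrm{Span}_\R(\cD_\cJ^{sa})\subseteq(\cD^{sa}\cap\cJ)+(\cJ^2)^{sa}$---is exactly the structural fact the paper exploits, and your block-anti-diagonal counterexample $A=\sum_n a_n(e_{2n-1}\otimes e_{2n}+e_{2n}\otimes e_{2n-1})$ is in the same spirit as the paper's $B=\begin{pmatrix}0&X\\X^*&0\end{pmatrix}$ with $s(X)\in\Sigma(\cJ)\setminus\Sigma(\cJ^2)$. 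The difference is in how non-membership in the subspace is verified. The paper reindexes the basis by $\Z$, splits $\cH$ into two halves via $P_J$, and simply observes that for any finite sum $A_1+\cdots+A_n$ with $A_i\in\cD_\cJ$ the off-diagonal corner $P_J(\sum A_i)(I-P_J)$ lies in $\cJ^2$, whereas $P_JB(I-P_J)$ is $X$ itself with singular values outside $\Sigma(\cJ^2)$; this reads the contradiction off directly with no estimate needed. Your proof instead works with the full operator $A-D$ and proves the pointwise singular-value bound $s_k(A-D)\geq a_k$ from the $2\times2$-block Hilbert--Schmidt trace inequality, then invokes heredity of $\Sigma(\cJ^2)$. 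Both routes are valid; yours is self-contained and slightly more computational, while you could streamline it by also compressing to the off-diagonal corner relative to $J=\{2n-1:n\geq 1\}$, which turns your singular-value lemma into an identity, exactly as in the paper.
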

\begin{proof}
Our argument is simpler if we let the orthonormal basis of $\cH$ be described as
$\e=\{e_j\}_{j\in\Z}$ (i.e. we use the integers to describe the elements of the fixed orthonormal basis 
of $\cH)$. Assume that 
$A\in \cD_\cJ$ and consider $V=1+K\in U_\cJ(\cH)$, for some $K\in\cJ$ and $D\in\cD$ such that 
$$A=V^*\,D\,V= (1+K)^*\,D\,(1+K)= D+K^*D+DK+K^*DK\,.$$ In this case, $D=V\,A\,V^*\in\cJ$, too.
Let $J\subset \Z$ be any subset  and let
$P_J$ be the orthogonal projection onto the closure of the subspace spanned by 
$\{e_j\ : \ j\in J\}$. Then, we consider the $(1,2)$ anti-diagonal block of the matrix representation 
of $A$ with respect to decomposition of the identity $\{P_J, 1-P_J\}$, that is
\begin{eqnarray*}
P_J \,A\,(1-P_J)&=&P_J \,(D+K^*D+DK+K^*DK)\,(1-P_J)\\ &=& P_J \,(K^*D+DK+K^*DK)\,(1-P_J )\in \cJ^2\, . \end{eqnarray*}
Here we have used that $P_J \,D \,(1-P_J )=0$, since $D\in\cD$ is a diagonal operator with respect to the orthonormal basis $\rm e$.

Hence, if $A_1,\ldots,A_n\in \cD_\cJ$ and $P_J$ is as before, then
$$
P_J \,(A_1+\ldots+A_n)\,(I-P_J)=P_J \,A_1\,(I-P_J )+\ldots+P_J \,A_n\,(I-P_J )\in\cJ^2\,.
$$
Now, let $X\in \cB(\cH\ominus \cK\coma \cK)$ be such that $s(X)\in \Sigma(\cJ)\setminus \Sigma(\cJ^2)$, where $\cK\subset \cH$ is the closure of the subspace spanned by $\{e_j\}_{j \geq 1}$. Then, we consider the operator 
$$
B=\begin{pmatrix} 
0 & X \\ X^* & 0
\end{pmatrix}.  
$$
It is well known that in this case 
$$s(B)=D_2(s(X))=(s_1(X),s_1(X),s_2(X),s_2(X),\ldots)\in\Sigma(\cJ)\,,$$ 
 so  $B\in \cJ^{sa}$ using that characteristic sets are invariant by ampliations.
Now we choose $J=\N\subset \Z$. Therefore, $P_J=P_\cK$ is the orthogonal projection onto $\cK$, so
$$
P_J\,B\,(1-P_J)=\begin{pmatrix} 
0 & X \\ 0 & 0
\end{pmatrix} \notin \cJ^2,
$$ since $s(P_J\,B\,(I-P_J))=s(X)\in \Sigma(\cJ)\setminus \Sigma(\cJ^2)$. Hence, $B$ cannot lie in the linear span of $\cD_\cJ^{sa}$.
\end{proof}

For our next result on the structure of restricted orbits, we 
recall
\cite[Prop. 3.1]{BPW16} and 
\cite[Problem 6.6]{BPW16}.

\begin{pro}\label{pro BPW16 sobre diag}
If a normal compact operator $X \in \cJ \neq \cB(\cH)$ with spectral multiplicities equal to one is $\cU_{\cJ}(\cH)$-diagonalizable to a diagonal operator $D\in\cD$ with respect to the fixed basis $\e=\{ e_n\}_{n \geq 1}$, then $D$ is unique up to a finite permutation. Furthermore, 
$X$ can be $\cU_{\cJ}(\cH)$-diagonalized to every finite permutation of $D$. 
\end{pro}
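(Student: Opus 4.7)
\medskip

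The plan is to reduce both statements to facts about the unitary $W=U_2^*U_1\in\cU_\cJ(\cH)$ comparing two diagonalizations, and to exploit that $W-I$ is compact. Suppose $X=U_1D_1U_1^*=U_2D_2U_2^*$ with $U_i\in\cU_\cJ(\cH)$ and $D_i\in\cD$. Then $WD_1W^*=D_2$. Since $X$ has simple spectrum, both $D_1=\sum_{n\geq 1}\mu_n^{(1)}\,e_n\otimes e_n$ and $D_2=\sum_{n\geq 1}\mu_n^{(2)}\,e_n\otimes e_n$ list every eigenvalue of $X$ exactly once; hence there is a unique permutation $\sigma$ of $\N$ with $\mu_n^{(2)}=\mu_{\sigma(n)}^{(1)}$. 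Matching the corresponding rank-one spectral projections via $W(\,\cdot\,)W^*$ yields $W\,(e_{\sigma(n)}\otimes e_{\sigma(n)})\,W^*=e_n\otimes e_n$, so $We_{\sigma(n)}=\alpha_n\,e_n$ for scalars $|\alpha_n|=1$.

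Next I would show $\sigma$ has finite support. If $\sigma(n)\neq n$ then $e_n\perp e_{\sigma(n)}$ and therefore
\[
\|(W-I)e_{\sigma(n)}\|^{2}=\|\alpha_n e_n-e_{\sigma(n)}\|^{2}=2.
\]
Setting $K=W-I\in\cJ\subseteq\cK(\cH)$, assume for contradiction that $S=\{n:\sigma(n)\neq n\}$ is infinite. Since $\sigma$ is injective, $\{e_{\sigma(n)}\}_{n\in S}$ is an infinite orthonormal set, hence tends weakly to $0$; compactness of $K$ forces $\|Ke_{\sigma(n)}\|\to 0$ along any subsequence, contradicting the equality $\|Ke_{\sigma(n)}\|=\sqrt{2}$. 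Thus $\sigma$ is a finite permutation, and $D_2$ is a finite permutation of $D_1$.

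For the second assertion, given $X=UDU^*$ with $U\in\cU_\cJ(\cH)$ and any finite permutation $\tau$ of $\N$, define $V_\tau\in\cU(\cH)$ by $V_\tau e_n=e_{\tau^{-1}(n)}$. Since $\tau$ moves only finitely many indices, $V_\tau-I\in\cF(\cH)\subseteq\cJ$, so $V_\tau\in\cU_{\cF(\cH)}(\cH)\subseteq\cU_\cJ(\cH)$. A direct computation gives $V_\tau D V_\tau^{*}=\sum_{n\geq 1}\mu_{\tau(n)}\,e_n\otimes e_n=:D^{\tau}$, which is precisely the $\tau$-permutation of $D$. Setting $U':=UV_\tau^{*}\in\cU_\cJ(\cH)$ (the group property of $\cU_\cJ(\cH)$ is immediate from $\cJ$ being a two-sided ideal), we conclude $X=U'D^{\tau}(U')^{*}$, as desired.

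I expect the only delicate point to be the identification $We_{\sigma(n)}=\alpha_n e_n$ from $WD_1W^{*}=D_2$, which relies on the simple-spectrum hypothesis to ensure each spectral projection is rank-one; once this is in place, the compactness argument is the decisive step, and the second assertion reduces to a routine verification with a finite-rank unitary.
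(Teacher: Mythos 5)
The paper states this proposition without proof; it is recalled verbatim from \cite[Prop.~3.1]{BPW16} to set up the generalization proved later as Theorem~\ref{teo caract de las posibles diagonalizaciones}, so there is no internal proof to compare against. Your argument is correct and complete: passing to $W=U_2^*U_1\in\cU_\cJ(\cH)$, reading off $We_{\sigma(n)}=\alpha_n e_n$ from the rank-one spectral projections (this is exactly where the simple-spectrum hypothesis is used), the identity $\|(W-I)e_{\sigma(n)}\|^2=2$ whenever $\sigma(n)\neq n$, and the weak-to-null convergence of the orthonormal family $\{e_{\sigma(n)}\}_{n\in S}$ played against compactness of $W-I\in\cJ\subseteq\cK(\cH)$ together force $\sigma$ to have finite support; the converse via the finite-rank unitary $V_\tau$ and the group property of $\cU_\cJ(\cH)$ is routine.

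It is worth contrasting this with the paper's own extension. In Theorem~\ref{teo caract de las posibles diagonalizaciones} the spectral projections $P_n$ can have arbitrary (even infinite) rank, so one cannot match individual basis vectors and read off a permutation; the proof there instead exploits the $\cJ$ (or $\cJ^{-\rm am}$) membership of $\sum_{r}E'_r(I-P_r)$ coming from the restricted-diagonalization characterization, together with the uniform estimate $\|E_n-P_n\|\to 0$ from Lemma~\ref{finite diff}, to control the combinatorics of how the diagonal blocks can shuffle. Your elementary compactness argument is precisely the right and shortest tool for the simple-spectrum case, but it does not directly extend to the higher-multiplicity situation the paper goes on to treat.
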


In what follows we extend the analysis of Proposition \ref{pro BPW16 sobre diag}
to the general context of (normal) diagonalizable operators. We fix a  diagonalizable operator
$$
A=\sum_{n=1}^N \la_n \, P_n \, ,\, \, \, \, \, \, \, \, \,  N\in\N\ \ \ \ \text{or} \ \ \ \ N= \infty,
$$   
where the series converges strongly when $N=\infty$. We assume that $A\in\cD_\cJ$ for some proper operator ideal $\cJ$,
and consider its  restricted unitary orbit, i.e.
$$
\cO_\cJ(A)=\{ VAV^* \, : \, V \in \cU_{\cJ}(\cH) \}.
$$
In order to tackle
the previous 
problem,  we consider the set
$$ 
\cO_\cJ(A)\cap\cD=\{ B \in \cD \, : \, B=VAV^* \text{ for some }  V\in\cU_\cJ(\cH) \}\,.
$$
We will show that $\cO_\cJ(A)\cap\cD$ can also be  described as an orbit 
under the action of the finite permutations. Indeed,
let $U\in\cU_\cJ(\cH)$ be such that $$B=UAU^*=\sum_{n=1}^N \la_n \, E_n \in\cO_\cJ(A)\cap\cD\,,$$
where $E_n\in\cD$, for $1\leq n\leq N$, are such that $\{E_n\}_{n=1}^N$ is a decomposition of the identity.
Our next result is a consequence of the characterization obtained in Theorems \ref{teo1 L19}  and \ref{main result} (see also Remark \ref{additional rem}).

\begin{teo}\label{teo caract de las posibles diagonalizaciones}
With the previous notation, 
\beq \label{eq ident orbitadora1}
\cO_\cJ(A)\cap\cD=\{U_\sigma B U_\sigma^*\ : \ \sigma \text{ is a finite permutation } \}
\eeq where $U_\sigma\in\cU_\cJ(\cH)$ denotes the unitary operator induced by the finite permutation $\sigma$.
\end{teo}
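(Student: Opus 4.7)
The plan is to prove both inclusions in \eqref{eq ident orbitadora1}. The inclusion $\supseteq$ is immediate: for a finite permutation $\sigma$, the operator $U_\sigma - I$ is finite-rank, so $U_\sigma \in \cU_\cJ(\cH)$ (since $\cF(\cH) \subseteq \cJ$ by Calkin's theorem); conjugation by $U_\sigma$ preserves $\cD$, and $U_\sigma B U_\sigma^* = (U_\sigma U)\,A\,(U_\sigma U)^* \in \cO_\cJ(A) \cap \cD$.

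For the nontrivial inclusion $\subseteq$, I would fix $C \in \cO_\cJ(A) \cap \cD$, write $C = VAV^*$ with $V \in \cU_\cJ(\cH)$, and consider $W := VU^* \in \cU_\cJ(\cH)$, so that $WBW^* = C$. Setting $F_n := WE_nW^* \in \cD$, the spectral partitions $I_n := \{k : e_k \in R(E_n)\}$ and $J_n := \{k : e_k \in R(F_n)\}$ are decompositions of $\N$. Proposition \ref{cond proj res diag} applied to each pair $(E_n, F_n)$ yields $F_n - E_n \in \cJ \subseteq \cK(\cH)$ with $[F_n:E_n] = 0$; since $F_n - E_n$ is diagonal with entries in $\{-1,0,1\}$, compactness forces $I_n \triangle J_n$ to be finite, and the index formula \eqref{index pair} gives $|I_n \setminus J_n| = |J_n \setminus I_n|$.

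The main obstacle is upgrading this ``local'' finiteness to the finiteness of the total displacement set $S := \bigcup_{n=1}^N (I_n \triangle J_n)$, which a priori could be infinite if the support spreads across infinitely many spectral classes. The key observation is that $WE_nW^* = F_n$ forces $W R(E_n) = R(F_n)$, so for $k \in I_n \setminus J_n$ the vector $We_k$ lies in $R(F_n)$ while $e_k \perp R(F_n)$, whence $\langle We_k, e_k\rangle = 0$; applying the same reasoning to $W^*$ (using $W^* R(F_n) = R(E_n)$) handles $k \in J_n \setminus I_n$. Consequently, with $K := W - I \in \cJ \subseteq \cK(\cH)$, we obtain $\langle Ke_k, e_k\rangle = -1$ for every $k \in S$. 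But $K$ is compact and $e_k \to 0$ weakly, so $\|Ke_k\| \to 0$; in particular $\langle Ke_k, e_k\rangle \to 0$, forcing $S$ to be finite.

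Once $S$ is finite, I construct $\sigma$ by choosing, for each $n$ with $I_n \setminus J_n \neq \emptyset$, a bijection $\sigma_n: I_n \setminus J_n \to J_n \setminus I_n$ (possible since both sets are finite and equinumerous), gluing these into a bijection of $S$, and extending by the identity outside $S$. The resulting finite permutation satisfies $\sigma(I_n) = J_n$ for every $n$, so $U_\sigma E_n U_\sigma^* = F_n$ for all $n$ and therefore $U_\sigma B U_\sigma^* = \sum_n \lambda_n F_n = C$, as desired.
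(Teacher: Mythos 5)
Your proof is correct, and it takes a genuinely different and noticeably cleaner route than the paper's.

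Both arguments must establish that the ``displacement set'' is finite, but you do it in one stroke: for every $k$ in $S=\bigcup_n(I_n\triangle J_n)$ you observe that $We_k$ lies in $R(F_n)$ while $e_k\perp R(F_n)$ (or the symmetric statement using $W^*$), so $\langle We_k,e_k\rangle=0$ and hence $\langle(W-I)e_k,e_k\rangle=-1$; since $W-I$ is compact and $e_k\rightharpoonup0$, this forces $S$ to be finite. The paper instead argues by contradiction with a two-case split (only finitely many vs. infinitely many spectral classes carrying displaced indices), and in each case shows that the series $\sum_r E'_r(I-P_r)$ would fail to be compact, invoking Theorem \ref{teo1 L19}, Lemma \ref{finite diff} and Remark \ref{additional rem}. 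Your single diagonal-entry argument avoids that case analysis entirely and is more self-contained. The two proofs also diverge in how they build the finite permutation once finiteness is known: you glue bijections $\sigma_n:I_n\setminus J_n\to J_n\setminus I_n$ (justified by $[F_n:E_n]=0$, which gives $|I_n\setminus J_n|=|J_n\setminus I_n|$, and by the observation that $\{I_n\setminus J_n\}_n$ and $\{J_n\setminus I_n\}_n$ both partition $S$), whereas the paper carries out an inductive cycle-surgery in Remark \ref{hay perm finit}. Your construction is more direct and makes the role of the essential codimension transparent. Each approach buys something: the paper's proof reuses the machinery already developed in the earlier theorems and never mentions the auxiliary unitary $W=VU^*$ explicitly, while yours isolates exactly the analytic fact (diagonal entries of a compact perturbation of the identity tend to $0$) and the combinatorial fact (equicardinality of the displaced parts via the index) that drive the result.
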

\begin{proof}
 The inclusion of the set to the right into the set to the left in Eq. \eqref{eq ident orbitadora1} is clear, since $U_\sigma-I$ is a finite rank operator, for each finite permutation $\sigma$.

To show the other inclusion notice that, with the previous notation, $\{E_n\}_{n=1}^N$ induces a partition $\cP=\{I_n\}_{n=1}^N$ of $\N$ in such a way that
$E_n$ is the orthogonal projection onto the closure of subspace spanned by $\{e_i:\ i\in I_n\}$.

Take $B'\in \cO_\cJ(A)\cap\cD$. Then,  $B'=\sum_{n=1}^N \la_n \, E'_n \in\cD$ for some decomposition of the identity $\{E'_n\}_{n=1}^N$ in $\cD$. In this case, based on the theory of multiplicity of eigenvalues, there exists a permutation $\tau:\N\rightarrow \N$ such that $E'_n$ is the orthogonal projection onto the closure of the subspace spanned by $\{e_{\tau(i)}\ : \ i\in I_n\}$, for $1\leq n\leq N$.

Consider 
$$
\cM(\tau,\cP)=\{i \geq 1\ : \ i\in I_n\ , 1\leq n\leq N \ \text{ and } \ \tau(i)\notin I_n  \}\,.
$$ 
We now show that $\cM(\tau,\cP)$ is a finite set by way of contradiction. Thus, we assume that $\cM(\tau,\cP)$ is an infinite set and consider the following two cases:

Case 1: $\{1\leq n\leq N\ :  \ \exists i\in I_n \ , \ \tau(i)\notin I_n\}$ is a finite set. In this case, there exists $1\leq n\leq N$ such that $\cN=\{i\in I_n \ , \ \tau(i)\notin I_n\}$ is an infinite set.
Then, by hypothesis, $P_n-E_n\in\cJ$ and $[P_n:E_n]=0$. In particular, 
$\lim_{i\in\cN,\,i\rightarrow \infty}\|(E_n-P_n)(e_i)\|=0$. Hence there exists $i_0\in\cN$ such that 
for $i \geq i_0$ we have that $\|e_i-P_n(e_i)\|\leq 1/2$. Notice that in this case $\|e_i-(I-P_m)(e_i)\|\leq 1/2$, for $i\in\cN$, $i\geq i_0$ and $m\neq n$. Now fix $i\in\cN$, $i\geq i_0$ and let 
$m\neq n$ be such that $E'_me_i=e_i$, so that 
$$
\|e_i-E'_m(I-P_m)(e_i)\|=\|E'_m (e_i- (I-P_m)(e_i))\|\leq 1/2.
$$
Therefore,
$$
\|E'_m(I-P_m)(e_i)\|\geq 1/2\,.
$$
The previous inequality shows that $\|\sum_{r=1}^N E'_r(I-P_r)(e_i)\|\geq \|E'_m(I-P_m)(e_i)\|\geq 1/2$, for $i\in\cN$, $i\geq i_0$. Since the set of such indexes is infinite, we see that $\sum_{r=1}^N E'_r(I-P_r)$ is not a compact operator, which contradicts the fact that $B'\in\cO_\cJ(A)\cap \cD$ by 
Theorem \ref{teo1 L19} if $N\in \N$ or by item $iv)$ in Remark \ref{additional rem} if $N=\infty$.

Case 2: $\{1\leq n\leq N\ :  \ \exists i\in I_n \ , \ \tau(i)\notin I_n\}$ is an infinite set (hence $N=\infty$). In this case we notice that by Lemma \ref{finite diff} we have that $\lim_{n\rightarrow \infty}\|E_n-P_n\|=0$. 
We argue as before: let $n_0\geq 1$ be such that $\|E_n-P_n\|\leq 1/2$ for $n \geq n_0$. Then, the set
$\cN=\{i\in\N\ : \ \exists n\geq n_0\ , \ i\in I_n \ , \ \tau(i)\notin I_n\}$ is infinite. For each 
$i\in\cN$ we have that $\|\sum_{r\geq 1} E'_r(I-P_r) (e_i)\|\geq 1/2$, as before. Hence, in this case 
$\sum_{r\geq 1} E'_r(I-P_r)$ is not a compact operator, which again contradicts the fact that $B'\in\cO_\cJ(A)\cap \cD$, by item $iv)$ in Remark \ref{additional rem}. 

Hence $\cM(\tau,\cP)$ is a finite set. Now, it is a combinatorial exercise to show that there exists a finite permutation $\sigma:\N\rightarrow \N$ such that $\sigma(I_n)=\tau(I_n)$, for $1\leq n\leq N$ 
(see Remark \ref{hay perm finit} below).
\end{proof}

\begin{rem}\label{hay perm finit}
We sketch a brief argument to prove the last claim in the previous proof. We apply induction on the number of elements $\#(\cM(\tau,\cP))$. Fix $i\in \cM(\tau,\cP)$. Consider the following cases:

Case 1: there exists $1\leq k_0\in\N$ such that $\tau^{k_0}(i)=i$. If we consider the finite cycle $\mu=(i \ \tau(i) \ \ldots \ \tau^{k_0-1}(i))$ then
$\mu^{-1}\circ \tau$ is a permutation such that $\mu^{-1}\circ \tau(\tau^r(i))=\tau^r(i)$, for $r\in\Z$
 and $\mu^{-1}\circ \tau(j)=j$, $j\neq \tau^r(i)$ and $r\in\Z$. Hence,
$\#(\cM(\mu^{-1}\circ \tau,\cP))\leq  \#(\cM(\tau,\cP))-2$ (since there are at least two jumps from different $I_m$'s in the cycle, that we have erased). 
In this case we can apply our inductive hypothesis to $\mu^{-1}\circ \tau $ and obtain a finite permutation $\sigma'$ such that $\mu^{-1}\circ \tau(I_n)=\sigma'(I_n)$ for $1\leq n\leq N$, from which it follows that $\sigma=\mu\circ \sigma'$ has the desired properties.

Case 2: $\tau^k(i)\neq i$ for $k\in\Z$. In this case there exists $n_0\in\N$ and $1\leq k'\in\N$ such that $\tau^k(i)\in I_{n_0}$, for $k\geq k'$. Let $k'\leq k_0=2\,\alpha+1$, for some $\alpha\in\N$.  Similarly, there exists an index $r'\leq 0$
such that $\tau^k(i)\in I_{m_0}$, for some (unique) $m_0\in\N$, for every $k\leq r'$; let $r_0=-2\,\beta\leq r'$ for some $\beta\in\N$. We now define the finite cycle $\rho:\N\rightarrow \N$ given by 
$\rho=( \tau^{r_0}(i) \ \tau^{r_0+1}(i) \ \ldots \ \tau^{k_0}(i))$. 
 Notice that in this case $\cM(\rho\circ \tau,\cP)\leq \cM(\tau,\cP)+1$ (since we are adding a jump
from $I_{n_0}$ to (the possibly different set) $I_{m_0}$ at the value $\rho\circ \tau(\tau^{k_0-1}(i))=\tau^{r_0}(i)$). On the other hand, it is easy to check that $(\rho\circ \tau)^{\alpha}=\tau^{r_0}(i)$ and that $(\rho\circ \tau)^{\alpha+\beta}=\tau^{0}(i)=i$. We now construct, as in case 1, the finite cycle $\mu=(i \ \  (\rho\circ \tau)(i)\  \ldots\  (\rho\circ \tau)^{\alpha+\beta-1}(i))$ so that 
$\cM(\mu^{-1}\circ (\rho\circ \tau),\cP)\leq \cM(\rho\circ \tau,\cP)-2\leq \cM(\tau,\cP)-1$.
In this case we can apply our inductive hypothesis to $\mu^{-1}\circ \rho\circ \tau $ and obtain a finite permutation $\sigma'$ such that $\mu^{-1}\circ \rho\circ \tau(I_n)=\sigma'(I_n)$ for $1\leq n\leq N$, from which it follows that $\sigma=\rho^{-1}\circ \mu\circ \sigma'$ has the desired properties.
\end{rem}

\begin{rem}
Consider the following equivalence relation in $\ell^\infty(\N)$: given $\alpha=\{\alpha_n\}_{n\geq 1}$,
$\beta=\{\beta_n\}_{n\geq 1}\in \ell^\infty(\N)$ then $\alpha\approx \beta$ if and only if there exists $U\in\cU(\cH)$ such that $D_\alpha=UD_\beta\,U^*$, where $D_\alpha,\,D_\beta\in\cD$ are the diagonal operators induced by the sequences $\alpha$ and $\beta$, respectively. It is straightforward to check that 
$\alpha\approx \beta$ if and only if $\cC=\{\alpha_n\ : \ n\in\N\}=\{\beta_n\ : \ n\in\N\}\subset\C$ and for every $\gamma\in\cC$ we have that $\#(\{n\in\N \ : \ \alpha_n=\gamma\})=\#(\{n\in\N \ : \ \beta_n=\gamma\})\in [0,\infty]$.

Given $\alpha\in\ell_\infty(\N)$ let $[\alpha]$ denote the equivalence class of $\alpha$ with respect to $\approx$. We can further consider the equivalence relation in $[\alpha]$ given by 
$\gamma\equiv\delta$ if there exists a finite permutation $\sigma$ such that $\gamma_n=\delta_{\sigma(n)}$, for $n\in\N$. We further denote by $[[\gamma]]$ the equivalence class of $\gamma\in[\alpha]$ with respect to $\equiv$.

Given a proper operator ideal $\cJ$ then Theorem \ref{teo caract de las posibles diagonalizaciones} allow us to describe the set $\cD_{\cJ, \,\cB(\cH)}$ of restricted diagonalizable operators as follows:
$$
\cD_{\cJ, \,\cB(\cH)}=\bigcup_{[\alpha]\in \ell_\infty(\N)/\approx} \ \bigcup_{[[\gamma]]\in [\alpha]/\equiv} \cO_{\cJ}(D_\gamma)
$$
where all unions are disjoint. 
\end{rem}

\subsection*{Acknowledgment}
This research was partially supported by  CONICET (PIP 2016 0525/ PIP 0152 CO), ANPCyT (2015 1505/ 2017 0883) and FCE-UNLP (11X829).

{\small

}

\bigskip

\noi (Eduardo Chiumiento) Departamento de  Matem\'atica \& Centro de Matem\'atica La Plata, FCE-UNLP, Calles 50 y 115, 
(1900) La Plata, Argentina  
and 
Instituto Argentino de Matem\'a\-tica, `Alberto P. Calder\'on', CONICET, Saavedra 15 3er. piso,
(1083) Buenos Aires, Argentina.

\noi e-mail: {\sf eduardo@mate.unlp.edu.ar}

\medskip

\noi (Pedro Massey) Departamento de  Matem\'atica \& Centro de Matem\'atica La Plata, FCE-UNLP, Calles 50 y 115, 
(1900) La Plata, Argentina  
and 
Instituto Argentino de Matem\'atica, `Alberto P. Calder\'on', CONICET, Saavedra 15 3er. piso,
(1083) Buenos Aires, Argentina.

\noi e-mail: {\sf massey@mate.unlp.edu.ar}

\end{document}